\documentclass{article}
\usepackage[utf8]{inputenc}
\usepackage{amsmath,amsthm,amssymb}
\usepackage{graphicx}
\usepackage{subfig}
\usepackage{tikz}
\usepackage{fullpage}
\usepackage[style=alphabetic, giveninits=true,maxnames=5]{biblatex}
\addbibresource{biblio.bib}
\usepackage{calrsfs}
\usepackage{epsfig}
\usepackage{dsfont}
\usepackage{mathrsfs}
\usepackage[inline]{enumitem}
\setlength {\marginparwidth }{2cm}
\usepackage{todonotes}
\usepackage{mathtools}
\usepackage[absolute,overlay]{textpos}
\usepackage{graphicx}
\usepackage{subfig}
\usepackage{tikz}
\usetikzlibrary{matrix}
\usetikzlibrary{arrows,positioning}
\usepackage{color}
\usetikzlibrary{3d}
\usepackage{xifthen}
\usetikzlibrary{decorations.pathmorphing}
\usepackage{mymacros}
\usepackage[affil-it]{authblk}
\usepackage[style=alphabetic, giveninits=true]{biblatex}
\addbibresource{biblio.bib}

\title{Reversibility,  covariance and coarse-graining for Langevin dynamics: \\ On the choice of multiplicative noise}
\author[1]{Mario Ayala}
\author[2]{Nicolas Dirr}
\author[3]{Grigorios A. Pavliotis}
\author[1]{Johannes Zimmer}

\affil[1]{School of Computation, Information and Technology, Chair for Analysis and Modelling, Technische Universität München, Boltzmannstraße 3, 85748 Garching, Germany}
\affil[2]{School of Mathematics, Cardiff University, Senghennydd Road, Cardiff CF24 4AG, United Kingdom}
\affil[3]{Department of Mathematics, Imperial College London, South Kensington Campus, London SW7 2AZ, United Kingdom}

\date{\today}

\begin{document}
\maketitle

\begin{abstract}
We study the interplay between reversibility, geometry, and the choice of multiplicative noise (in particular It\^{o}, Stratonovich, Klimontovich) in stochastic differential equations (SDEs). Building on a unified geometric framework, we derive algebraic conditions under which a diffusion process is reversible with respect to a Gibbs measure  on a Riemannian manifold. The condition depends continuously on a parameter $\lambda \in [0,1]$ which interpolates between the conventions of It\^o ($\lambda = 0$), Stratonovich ($\lambda = \frac 1 2$)  and Klimontovich ($\lambda = 1$). For reversible slow-fast systems of SDEs with a block-diagonal diffusion structure, we show, using the theory of Dirichlet forms, that both reversibility and the Klimontovich noise interpretation are preserved under coarse-graining.

In particular, we prove that the effective dynamics for the slow variables, 
obtained via projection onto a lower-dimensional manifold, 
retain the Klimontovich interpretation and remain reversible with respect to the marginal Gibbs measure/free energy. 

Our results provide a flexible variational framework for modeling coarse-grained reversible dynamics with nontrivial geometric and noise structures.
\end{abstract}

\section{Introduction}
Diffusion processes, described either  with stochastic differential equations or the corresponding Fokker-Planck equation, are among the fundamental tools of statistical mechanics~\cite{Ris84, Soize1994, Zwan01}. They are used extensively as models in physics, chemistry, biology and many other applications~\cite{coffey04, vanKamp81}. They are also a central tool in the study of algorithms for sampling and optimization, e.g., for Markov Chain Monte Carlo (MCMC) methods, or in the study of stochastic gradient descent~\cite{Li_2019, Dai_Zhu_2020}.  

A natural starting point, both from the perspective of statistical mechanics and from sampling, is to consider the formal Gibbs measure given by
\[
G(dx) = \frac{1}{Z} e^{-V(x)}\, dx,
\]
where $V \colon \mathbb{R}^d \to \mathbb{R}$ is a smooth confining potential and $Z < \infty$ is a normalization constant. We say that the measure is \emph{stationary} or \emph{invariant} for a stochastic process if it is constant under evolution (see Definition~\ref{def.stat-rev-SDE}). A key problem in nonequilibrium statistical mechanics is to characterize the class of diffusion processes for which this measure is not only invariant but also \emph{reversible}~\cite{bakry2013analysis}, i.e.,  the stochastic dynamics satisfies the stronger condition of detailed balance with respect to $G$~\cite[Ch. 4]{pavliotis2014stochastic} (see Section~\ref{sec:rev-stat} for the precise definition).  Then, the thermodynamic equilibrium is encoded not only at the level of invariant measures but also at the level of the pathwise construction of the process~\cite{CoPa_2023}.
  
It is worth highlighting that stationarity and reversibility are distinct notions. While every reversible process admits an invariant measure, the converse is not true: there exist stationary but non-reversible processes for which the dynamics breaks time-reversal symmetry and exhibits strictly positive entropy production. In this case, the distribution of states remains unchanged in time, but typical trajectories and their time-reversals carry different probabilities. As shown in~\cite{maes2003time}, the entropy production can be identified precisely with this asymmetry, and its positivity is a hallmark of nonequilibrium steady states, consistent with fluctuation relations of Gallavotti--Cohen type. Furthermore, recent work~\cite{CoPa_2023} has clarified that for diffusion processes, reversibility depends on a decomposition of the drift into reversible and irreversible parts. In particular, entropy production is determined by how the irreversible component projects onto the range of the diffusion matrix.  

It is well known that the overdamped Langevin stochastic differential equation (SDE)
\[
dX_t = -\nabla V(X_t)\, dt + \sqrt{2}\, dW_t
\]
generates a diffusion process that is reversible with respect to $G$. In this classical setting, the noise is additive, which implies that the volatility matrix $\sigma: \Rd \to \R^{d \times d}$ is constant, here $\sqrt{2}$ times the identity matrix in $\Rd$. The structure of such processes is now well understood, both from the viewpoint of equilibrium statistical mechanics and from applications in sampling and optimization.  

A natural extension is to consider multiplicative noise,
\begin{equation}
\label{eq.lang-intro}
dX_t = -\sigma(X_t)\sigma(X_t)^T \nabla V(X_t)\, dt + \sqrt{2}\, \sigma(X_t)\, dW_t,
\end{equation}
where $\sigma \colon \mathbb{R}^d \to \mathrm{GL}(d) \subset \mathbb{R}^{d \times d}$ is a smooth invertible matrix-valued function and to ask: under what conditions on $\sigma$ is the Gibbs measure $G$ reversible for the dynamics? (The scaling by $\sigma(X_t)\sigma(X_t)^T$ in the drift and $\sqrt{2}\, \sigma(X_t)$ in the noise is natural in view of the fluctuation-dissipation theorems; in particular, for the generator associated with this stochastic process, the drift and diffusion both have a multiplicative prefactor $\sigma^T\sigma$, which can be interpreted as underlying geometry.) This generalization from additive to multiplicative noise is far from a mere technicality. On the one hand, it naturally arises in a variety of modeling situations (see the examples discussed below in this section). On the other hand, it reveals a truly new phenomenon: the presence of state-dependent noise can introduce irreversible components in the dynamics, yet in some cases these can be exactly compensated for by suitable modifications of the drift so that reversibility with respect to $G$ is preserved. Understanding the precise balance between multiplicative noise and drift corrections is therefore central to extending the Langevin paradigm beyond the additive noise setting.  

One motivation for considering the multiplicative case is to enhance the rate of convergence to equilibrium when the process starts from an arbitrary initial distribution. In this context, the speed of convergence is governed by the spectral gap
\[
\lambda_\sigma := \inf_{ \substack{ \phi \\ \int \phi\, dG = 0}} \frac{\mathcal{E}_\sigma(\phi)}{\int \phi^2\, dG},
\]
where $\mathcal{E}_\sigma$ denotes the Dirichlet form associated with the generator of the process. As shown in \cite{lelièvre2024optimizing}, the matrix-valued function $\sigma$ can, in principle, be optimized--subject to appropriate constraints on the diffusion matrix $\sigma \sigma^T$--to maximize the spectral gap and thereby accelerate convergence to equilibrium.  The use of {\emph preconditioned Langevin dynamics} for sampling  is studied in detail in~\cite{Chen_al_2024}; in particular, the connection with Fisher-Rao gradient flow is established and the importance of the affine invariance property is elucidated.

In addition, multiplicative noise appears naturally in the mathematical formulation of fluctuating hydrodynamics, including in models of active matter (see~\cite{Muller2025a} for a derivation). The multiplicative noise structure can also be used to learn macroscopic evolution operators for gradient-flow evolution from particle data~\cite{Li2019a, Huang2025a}. 

The mobility (diffusion) tensor $\sigma(X_t)\sigma(X_t)^T$ defines a differential-geometric structure: first, we notice that the matrix $\sigma$ induces a \emph{heterogeneous noise landscape}, where regions of small $\sigma(x)$ (i.e. small eigenvalues of $ \sigma(x)\sigma(x)^T$) experience suppressed fluctuations, while regions of large $\sigma(x)$ are dominated by noise. This spatial inhomogeneity naturally suggests a geometric perspective, in which $\sigma$ encodes a position-dependent metric structure on $\mathbb{R}^d$. To formalize this intuition, we assume that $\sigma$ is smooth and that the diffusion matrix $\sigma(x)\sigma(x)^T$ is uniformly positive definite. Under these assumptions, the diffusion matrix defines a Riemannian metric on $\mathbb{R}^d$. More precisely, we consider the Riemannian manifold $(\mathbb{R}^d, g)$ with the metric tensor given by $g(x) = \left(\sigma(x) \sigma(x)^T\right)^{-1}$. We endow this manifold with the Levi-Civita connection associated with $g$~\cite{hsu2002stochastic,weinberg1972gravitation}. We summarize the relevant background from differential geometry in Section~\ref{sec:CovariantDiv}. Within this geometric framework, the stochastic process $X_t$ can be interpreted as a diffusion on the Riemannian manifold $(\mathbb{R}^d, g)$.

It is important to emphasize that when the SDE~\eqref{eq.lang-intro} is interpreted in the Itô sense, reversibility with respect to $G$ holds only under strong conditions, see Theorem~\ref{MainThmKlimGeo} and the discussion in Subsection~\ref{sec:main-contrib}. We show that this obstacle can often be overcome by adopting alternative interpretations of the stochastic integral, such as the Stratonovich or Klimontovich conventions, which restore reversibility under suitable structural conditions on $\sigma$. 

Due to the high irregularity of Brownian motion, the meaning of an expression such as~\eqref{eq.lang-intro} depends on how one defines the stochastic integral. This choice, often referred to as the stochastic calculus convention, specifies at which point the integrand is evaluated within each infinitesimal increment of the Riemann sum approximating the integral; see, for example,~\cite[Chap.~3, (3.15)]{pavliotis2014stochastic}. Each convention has its own advantages: It\^o's formulation~\cite{ito1944109} ensures that the integral is a martingale, enabling powerful analytical tools such as martingale problems, which are central in defining weak solutions and proving convergence results; the Stratonovich formulation~\cite{stratonovich1966new} preserves the classical chain rule of differentiation, making it convenient in physical modeling; and the Hänggi--Klimontovich formulation~\cite{klimontovich1990ito,hanggi1978stochastic} (also called anti--It\^o) is often regarded as the most natural from the standpoint of non-equilibrium statistical theory, since as it was recently shown, it links stochastic differential equations with Fokker--Planck equations consistent with Fick's law of diffusion~\cite{escudero2025beneath}. 
Unlike the It\^o integral, these latter formulations involve evaluating the integrand at future (right-endpoint) values of the noise and are therefore anticipating, which introduces additional analytical subtleties in their rigorous construction. Moreover, both the Stratonovich and in particular Hänggi--Klimontovich formulations often provide a stronger mathematical existence theory for multiplicative noise~\cite{grun2025finite}. 
In this paper, we study the problem of identifying reversible measures for the Langevin evolution~\eqref{eq.lang-intro} within this framework of stochastic calculus conventions.

\subsection{Motivating example: the one-dimensional case}
The interplay between the choice of the stochastic integral (noise) and reversibility is already evident in the one-dimensional setting. Consider the measure 
\begin{equation}
    \label{eq.Gibbs}
G(dx) = \frac{1}{Z_V} e^{-V(x)}\, dx,
\end{equation}
of ``Gibbs structure''; here $V \colon \mathbb{R} \to \mathbb{R}$ is a potential (with precise conditions specified later). We ask if $G$ is stationary for a suitable process; a sufficient condition for this is that the process is reversible with respect to this measure. A stationary diffusion process $\{X_t\}_{t \ge 0}$ with invariant law $G$ is called reversible if the time-reversed process $\{X_{T-t}\}_{t\in[0,T]}$ has the same distribution as the forward process $\{X_t\}_{t\in[0,T]}$, for every $T>0$. Equivalently, all finite-dimensional distributions of the reversed process coincide with those of the original one. In this sense, the law of the process is invariant under time reversal, so that trajectories are statistically indistinguishable when run backward in time.
This property implies detailed balance, i.e.\ vanishing stationary probability flux, and zero entropy production rate~\cite[Sec.~4.6]{pavliotis2014stochastic}, \cite{JiangQianQian2004}.

It is well known~\cite[Chap.~I, pag.~46]{bakry2013analysis} that the most general second-order differential operator which is self-adjoint in $L^2(\mathbb{R}, G(dx))$ is given by
\begin{equation}\label{eq:1d_rev_gen_intro}
    \loc f(x) = \left[-\sigma(x)^2 V'(x) + 2 \sigma(x)\sigma'(x)\right] f'(x) + \sigma(x)^2 f''(x),
\end{equation}
if $\sigma(x)$ is a smooth positive function.

We now consider the SDE whose generator is given by~\eqref{eq:1d_rev_gen_intro}. In particular, we consider the generalized SDE
\begin{equation}
\label{eq.SDE-intro-lambda}
dX_t = -\sigma(X_t)^2 V'(X_t)\, dt + \sqrt{2}\, \sigma(X_t)\, \circ_\lambda dW_t,
\end{equation}
where $\circ_\lambda$ denotes a stochastic integral with a parameter $\lambda \in [0,1]$ that controls the evaluation point of the integrand in a discrete Riemann-Stieltjes approximation. In particular, $\lambda = 0$ leads to the It\^o integral, $\lambda = \frac 1 2$ to the Stratonovich integral, and $\lambda=1$ corresponds to the Klimontovich integral; see Appendix~\ref{sec:Conversion}. In this article, de demonstrate that the Klimontovich integral is a more natural choice, when thinking of the thermodynamics and coarse-graining of diffusion processes.

It is well known that the generator for~\eqref{eq.SDE-intro-lambda} is
\begin{equation}
\label{eq.generator-SDE}
\loc_\lambda f(x) = \left[-\sigma(x)^2 V'(x) + 2\lambda\, \sigma(x)\sigma'(x)\right] f'(x) + \sigma(x)^2 f''(x).
\end{equation}
A comparison of~\eqref{eq:1d_rev_gen_intro} and~\eqref{eq.generator-SDE} shows that in the It\^o formulation, $\lambda=0$, there is no stochastic process of the form~\eqref{eq.SDE-intro-lambda} which renders the process as reversible (and hence stationary) under the measure~\eqref{eq.Gibbs}, unless $\sigma$ is constant. In contrast, in the Klimontovich interpretation, $\lambda = 1$, the process is always reversible with respect to~\eqref{eq.Gibbs}. In fact, the
 Klimontovich noise is the only interpretation for which the canonical Gibbs measure is in one space dimension reversible under multiplicative noise. 

It is tempting to extrapolate from this example and conclude that the Klimontovich interpretation automatically yields reversibility for arbitrary $\sigma$ in any finite dimension. However, this is not the case. In higher dimensions, additional algebraic conditions on the matrix field $\sigma$ must be imposed in order to ensure reversibility. This stems from the fact that the Euclidean divergence operator $\nabla \cdot$ does not satisfy a chain rule, as would be required for the cancellation of noise-induced drifts. As a result, reversibility under Klimontovich noise (and for other choices of noise) in higher dimensions depends sensitively on the structure of $\sigma$, and must be characterized by explicit geometric conditions. We give such a characterisation in Theorem~\ref{MainThmKlimGeo}.

\subsection{Main task}\label{sec.main_task}
Let $V\colon \mathbb{R}^d \to \mathbb{R}$ be a smooth confining potential such that
\begin{equation}\label{Vgrowsfast}
  \lim_{|x| \to \infty} V(x) = +\infty,
\end{equation}
and $e^{-\beta V(x)} \in L^1(\mathbb{R}^d)$ for $\beta >0$.

Let $\sigma \colon \mathbb{R}^d \to \mathrm{GL}(d) \subset \mathbb{R}^{d \times d}$ be a smooth invertible matrix-valued function and define the diffusion tensor by
\[
M(x) := \sigma(x)\sigma(x)^T.
\]
We assume that $\sigma$ is \emph{uniformly elliptic}; that is, there exists a constant $C > 0$ such that
\[
\xi^T M(x)\, \xi \geq C \|\xi\|^2 \quad \text{for all } x \in \mathbb{R}^d \text{ and } \xi \in \mathbb{R}^d.
\]
The matrix field $M(x)$ induces a Riemannian metric $g$ with $g(x) = M^{-1}(x) = \left(\sigma(x)\sigma^T(x)\right)^{-1}$, turning the ambient space $ \mathbb{R}^d $ into a Riemannian manifold $ (\mathbb{R}^d, g) $. In this geometric setting, the Gibbs measure is defined as
\begin{equation}
\label{eq.gibbs-g}
G(dx) = \frac{1}{Z_V} e^{-V(x)}\, \mathrm{vol}_M(dx),
\end{equation}
where $ \mathrm{vol}_M(dx) := \sqrt{\omega_M(x)} dx$ is the Riemannian volume element corresponding to $g$, where $\omega_M(x) := \det(M^{-1}(x))$ is the Riemannian volume density.

We consider in this manifold a class of stochastic differential equations of the form
\begin{equation}\label{eq: noise-Lange}
dX_t = -\sigma(X_t)\sigma(X_t)^T \, \nabla V(X_t)\, dt + \sqrt{2}\, \sigma(X_t)\, \circ_\lambda dW_t,
\end{equation}
where $ \nabla V $ denotes the Euclidean gradient, and $ \circ_\lambda $ denotes a stochastic integral interpreted with parameter $ \lambda \in [0,1] $, interpolating between the conventions of Itô ($ \lambda = 0 $), Stratonovich ($ \lambda = \tfrac{1}{2} $), and Klimontovich ($ \lambda = 1 $).

We can also consider the inverse problem~\cite{lelièvre2024optimizing}: Given a smooth confining potential $V$, determine all smooth, uniformly elliptic diffusion matrices $M \colon \mathbb{R}^d \to \mathbb{R}^{d \times d}$ and noise characterized by $\lambda \in [0,1]$ such that the dynamics \eqref{eq: noise-Lange} is reversible with respect to the Riemannian Gibbs measure~\eqref{eq.gibbs-g}. That is, the inverse problem is to identify the pairs $(M, \lambda)$ for which the generator associated with \eqref{eq: noise-Lange} is (essentially) self-adjoint in $L^2(G)$, or equivalently, for which the dynamics satisfies the detailed balance condition with respect to $G$ on the manifold $(\mathbb{R}^d, g)$. This inverse problem can also be studied in a data-driven framework: we can learn the diffusion matrix--as well as the nonreversible perturbation and the entropy production rate--from particle trajectories~\cite{zhu2025identifiablelearningdissipativedynamics}.

\subsection{Main contributions}
\label{sec:main-contrib}
The main contributions of this paper can be summarized as follows.

\begin{itemize}
    \item We derive explicit algebraic conditions, in Euclidean local coordinates, on the matrix $\sigma$ under which the SDE \eqref{eq: noise-Lange} is reversible, depending on the parameter $\lambda$. In particular:
    \begin{itemize}
        \item \emph{Itô noise:} Reversibility holds if and only if 
        \begin{equation}\label{eq: geomcond ito}
            \nablac \cdot (\sigma\sigma^T) = 0,
        \end{equation}
        where $\nabla^c$ denotes the covariant divergence operator on matrix fields with respect to the Levi-Civita connection induced by $g$, see Section~\ref{sec:CovariantDiv}. By the metric compatibility this condition is equivalent to the canonical Euclidean coordinates being harmonic (see~\eqref{eq.harmonic}) for the metric $g = (\sigma\sigma^T)^{-1}$. This condition is rather restrictive: While it holds trivially for constant $\sigma$, i.e., additive noise, multiplicative It\^o noise results in a reversible diffusion process (with respect to the given Gibbs measure) only if the covariant divergence vanishes. In one space dimension, this conditions can never be satisfied unless $\sigma$ is constant;  for general state-dependent $\sigma$ it typically fails. Thus It\^o diffusions with multiplicative noise generically break reversibility and produce entropy, even if the stationary distribution is (formally) Gibbsian.
        \item \emph{Stratonovich noise:} Reversibility holds if and only if
        \begin{equation}\label{eq: geomcond strat}
            \sigma \nablac \cdot \sigma^T = 0.  
        \end{equation}
        In one space dimension, this condition cannot be satisfied, unless $\sigma$ is constant. However, in general, this is a weaker condition for reversibility than the one for It\^o noise, since to satisfy condition~\eqref{eq: geomcond strat} it is sufficient for the combination of $\sigma$ and its covariant derivative to vanish. 
        \item \emph{Klimontovich noise:} Reversibility holds if and only if
        \begin{equation}\label{eq: geomcond klim}
            \nablac \cdot(\sigma \sigma^T) = 2 \sigma \nablac \cdot \sigma^T.
        \end{equation}
        In one space dimension, this condition is always satisfied.
    \end{itemize}
    For other values $\lambda \in(0,1]$, conditions similar to~can be derived for $\lambda = \frac 1 2 $ in~\eqref{eq: geomcond strat} and $\lambda =1$ in~\eqref{eq: geomcond klim} ensuring reversibility.
    
    \item We demonstrate that, in the context of averaging principles for certain reversible slow-fast systems with multiplicative noise, both the reversibility and the Klimontovich interpretation of noise are preserved under averaging/coarse-graining. 
    
    \item We establish that averaging principles for fast-slow systems of the structure~\eqref{eq: noise-Lange} can be rigorously formulated via Mosco convergence of Dirichlet forms in the sense of Kuwae and Shioya~\cite{kuwae2003convergence}. This provides a rigorous approach to coarse-graining in this context, providing an alternative, elegant formulation to multiscale analysis and two--sclae convergence~\cite{DDP_2023}. In our setting, the averaging corresponds to a projection onto an effective submanifold, preserving the variational structure of the limit dynamics.
\end{itemize}

\paragraph{Organization of the paper.}
Section~\ref{sec:Preliminaries} reviews reversibility and stationarity for SDEs and introduces the geometric framework. In particular, Section~\ref{sec:CovariantDiv} recalls the covariant divergence and compares it with its Euclidean counterpart. Our main result, the derivation of algebraic conditions for reversibility depending on the noise interpretation, is stated and proved in Section~\ref{sec:MainResult}. There, we also interpret these conditions for the noises in the sense of Itô, Stratonovich, and Klimontovich. In Section~\ref{sec:Averaging}, we show that both reversibility and the Klimontovich interpretation of noise are preserved under averaging, using Mosco convergence of Dirichlet forms. In Appendix~\ref{sec:Conversion}  we recall the conversion formula for different noises and in Appendix~\ref{appendix.mosco} we collect background material on Mosco convergence.

\section{Preliminaries}\label{sec:Preliminaries}

\subsection{Reversibility and stationarity}
\label{sec:rev-stat}

In the analysis of stochastic processes, two fundamental notions characterize equilibrium behavior: \emph{stationarity} and \emph{reversibility}. Both can be understood either at the level of trajectories, via stochastic differential equations, or at the level of probability distributions, via the Fokker--Planck equation--at least for Markov processes.   Though closely related, they express different aspects of equilibrium: the former concerns the invariance of distributions under time evolution, while the latter reflects the symmetry of the dynamics under time reversal.

\subsubsection{Trajectory (SDE) perspective}

Let $\{X_t\}_{t\ge0}$ be a diffusion process on a smooth Riemannian manifold $(\mathcal{M},g)$, whose infinitesimal generator
\[
    \loc : \mathcal{D}(\loc) \subset C(\mathcal{M}) \to C(\mathcal{M})
\]
acts on sufficiently smooth test functions. From
 this point of view, stationarity and reversibility can be formulated directly in terms of~$\loc$.

\begin{definition}
\label{def.stat-rev-SDE}
The process admits a \emph{stationary measure}~$\mu$ if
\[
    \int_{\mathcal{M}} (\loc f)(x)\,\mu(dx) = 0
    \qquad\text{for all } f\in\mathcal{D}(\loc).
\]
This ensures that statistical observables do not evolve in time when the initial distribution is~$\mu$.

The process is \emph{reversible} with respect to~$\mu$ if the generator~$\loc$ is symmetric in~$L^2(\mu)$:
\[
    \int_{\mathcal{M}} f(x)\,(\loc g)(x)\,\mu(dx)
    = \int_{\mathcal{M}} g(x)\,(\loc f)(x)\,\mu(dx),
    \qquad \forall f,g\in\mathcal{D}(\loc).
\]
This condition encodes the invariance of path probabilities under time reversal.
\end{definition}

\subsubsection{Fokker--Planck (distributional) perspective}

The same notions can be expressed at the level of evolving probability measures.  
Let $(\mathcal{M}, g)$ be as above, with Riemannian volume form~$\mathrm{vol}$.

\medskip
Denote by $\mathbb{M}(\mathcal{M})$ the space of finite signed (Radon) measures on~$\mathcal{M}$,
and by $\mathscr{D}'(\mathcal{M})$ the space of distributions, i.e., the continuous dual of
$C_c^\infty(\mathcal{M})$.

For the diffusion process with generator~$\loc$, we define the adjoint operator
\[
  \loc^*:\mathbb{M}(\mathcal{M}) \longrightarrow \mathscr{D}'(\mathcal{M}),
  \qquad
  \int_{\mathcal{M}} (\loc f)\,d\mu
  = \int_{\mathcal{M}} f\,d(\loc^*\mu),
  \quad f\in\mathcal{D}(\loc),
\]
which acts on finite signed measures~$\mu$.

If $\mu_t$ denotes the law of $X_t$, its evolution is governed by the
\emph{forward Kolmogorov equation} (interpreted in some appropriate weak sense, see~\cite{BKRS2015})
\[
  \partial_t \mu_t = \loc^* \mu_t .
\]
When $\mu_t$ is absolutely continuous with respect to~$\mathrm{vol}$, we denote its density (Radon-Nikodim derivative)
\[
  p_t := \frac{d\mu_t}{d\mathrm{vol}}.
\]
There exists a (density–form) operator $\FP_{df}$ on functions such that
\[
  \loc^*(p\,\mathrm{vol}) = (\FP_{df}p)\,\mathrm{vol},
\]
and the evolution above becomes the \emph{Fokker--Planck equation}
\[
  \partial_t p_t = \FP_{df} p_t .
\]
Equivalently, in weak form,
\[
  \frac{d}{dt}\int_{\mathcal{M}} f\,p_t\,d\mathrm{vol}
  = \int_{\mathcal{M}} (\loc f)\,p_t\,d\mathrm{vol}
  = \int_{\mathcal{M}} f\,(\FP_{df} p_t)\,d\mathrm{vol},
  \qquad f\in\mathcal{D}(\loc).
\]

\smallskip
For simplicity of notation, we use the same symbol~$\loc^*$ to denote both the adjoint acting on measures and its corresponding action on densities, whenever no confusion arises.

If, in addition, $\loc^*$ admits a divergence representation on measures,
\[
  \loc^*\mu = -\,\nabla\!\cdot J(\mu),
\]
then, for densities~$p$, this takes the continuity form
\[
  \partial_t p + \nabla\!\cdot J(p) = 0,
\]
where $J(p)$ is the (probability) current associated with~$\FP_{df}$ through $\FP_{df}p = -\nabla\!\cdot J(p)$.

\begin{definition}
\label{def.stat-rev-FP}
A probability measure $\mu$ on~$\mathcal{M}$ is \emph{stationary} for~$\loc$ if
\[
    \loc^*\mu = 0,
\]
that is, $\int \loc f\,d\mu = 0$ for all $f\in\mathcal{D}(\loc)$.  
Equivalently, if $p = \frac{d\mu}{d\mathrm{vol}}$, then $\partial_t p = 0$ in the Fokker--Planck equation.

The process is \emph{reversible} with respect to~$\mu$ if the stationary current vanishes,
\[
    J(\mu) = 0,
\]
which implies the detailed balance condition
\[
    \int f\,(\loc g)\,d\mu = \int g\,(\loc f)\,d\mu,
    \qquad \forall f,g\in \mathcal{D}(\loc).
\]
\end{definition}

\medskip
Every reversible process is stationary, but the converse need not hold.  
These two viewpoints: trajectory-based and distribution-based, are complementary.  
In geometric settings, one must ensure that divergence and adjoint operations are taken with respect to the correct volume form, but the conceptual distinction remains the same.

\subsection{The generator of the Langevin SDE}
Let $X_t$ be given by the solution of the Langevin SDE \eqref{eq: noise-Lange} (we assume that $\sigma$ and $V$ satisfy the assumptions of Section~\ref{sec.main_task} ensuring the existence of a solution and the well-definedness of the Gibbs measure $G$). We recall that this structure induces a metric $g$, via  $M(x) = \left(\sigma(x)\sigma^T(x)\right)$ and $g = M^{-1}$. In this setting, see, for example,~\cite[Pag.~314]{capitaine2007onsager}, the infinitesimal generator of this diffusion process takes the form
\begin{equation}\label{eq: inf_gen_geo}
\loc_M f = \Delta_M f + B \cdot \nabla f,
\end{equation}
where $\Delta_M$ is the Laplace–Beltrami operator associated with the metric $g$ given in~\eqref{eq: def DeltaM} below, and $B$ denotes a corrected drift vector field that accounts for both the original drift $-M \nabla V$, the correction due to the noise (such as the Stratonovich-It\^o correction in case of Stratonovich noise, $\lambda = \frac 1 2$, see Appendix~\ref{sec:Conversion}), and geometric contributions arising from the stochastic calculus on the manifold, see~\eqref{eq.drift}.

In local coordinates, the Laplace–Beltrami operator acting on a smooth function $f \colon \R^d \to \R$ reads
\begin{equation}\label{eq: def DeltaM}
\Delta_M f = \frac{1}{\sqrt{\omega_M}}\partial_i \left(  \sqrt{\omega_M} \, M^{ij}\, \partial_j f \right),
\end{equation}
where $M^{ij}$ denotes the $(i,j)$-entry of the diffusion metric (or the inverse metric) $M(x) = \sigma(x) \sigma^T(x)$, and $\omega_M(x) := \det(M(x)^{-1})$ is the Riemannian volume density, see~\cite[Appendix C]{bakry2013analysis}. Notice that we have used the Einstein convention for summation of repeated indices. In local coordinates, using the noise correction given by~\eqref{DriftDictionary}, the corrected drift vector takes the form
\begin{equation}
\label{eq.drift}
    B^i =\underbrace{ - M^{ij} (\nabla V)_j}_{\text{drift}} + \underbrace{2 \lambda \left( \partial_j M^{ji} - \sigma^{i\ell} \partial_k \sigma^{k\ell}  \right)}_{\text{noise term}} + \underbrace{M^{kj} \Gamma_{kj}^i,}_{\text{geometric correction}}
\end{equation}
where $\Gamma_{ij}^k$ denote the Christoffel symbols, see Subsection~\ref{sec:CovariantDiv}, whose term in~\eqref{eq.drift} comes from the relation
\begin{equation*}
    \Delta_M f =  M^{ij} \partial_i \partial_j f - M^{kj} \Gamma_{kj}^i \partial_i f.
\end{equation*}
This relation can also be found  in~\cite[Pag.~314]{capitaine2007onsager}.

\subsection{The Fokker--Planck equation}

When considering $X_t$ as taking values in the Riemannian manifold $\left(\R^d, g\right)$, it is natural to describe the evolution of the law of $X_t$ in terms of a probability density with respect to the Riemannian volume measure
\[
d\mu_M(x) := \sqrt{\omega_M(x)}\, dx.
\]
 Let us denote this density by $K(t,x)$, so that the law of $X_t$ satisfies
\[
\mathbb{P}(X_t \in A) = \int_A K(t,x) \, d\mu_M(x).
\]
Equivalently, we may relate $K(t,x)$ to the \emph{flat} probability density $p(t,x)$ through
\[
K(t,x) = p(t,x) \, \frac{1}{\sqrt{\omega_M(x)}}.
\]

Since the Laplace--Beltrami operator $\Delta_M$ is self-adjoint in $L^2(d\mu_M)$, the Fokker--Planck equation for $K(t,x)$ takes the covariant form
\begin{equation}\label{eq:fpe_covariant}
\partial_t K = -\nablac_i \left( B^i K \right) + M^{ij} \nablac_i \nablac_j K,
\end{equation}
where the Einstein summation convention is used and $\nablac_i$ denotes the covariant derivative with respect to the metric $g$. We refer to~\cite{graham1977covariant,jauslin1985classification} for a derivation. 

This formulation ensures full compatibility with the Riemannian structure induced by the noise, and coincides with the covariant Fokker--Planck equation derived by Graham in the context of nonequilibrium statistical mechanics and stochastic thermodynamics~\cite{graham1977covariant}.

\subsection{Covariant derivative and covariant divergence}\label{sec:CovariantDiv}

Let $\left(\Rd, g\right)$ be the Riemannian manifold where the metric $g$ is defined as the inverse of the diffusion matrix $M(x) = \sigma(x) \sigma^T(x)$. In local coordinates, we denote by $g_{ij}(x)$ the components of the metric and by $M^{ij}(x)$ the components of its inverse.

The Levi--Civita connection $\nablac$, often called \emph{covariant derivative}, is the unique torsion-free connection compatible with the metric. Based on Theorem 4.31 and Definition 4.31 in~\cite{jostRiemannianGeometryGeometric2011} we adopt the following definition (formulae for the Levi--Civitta connection acting on tensors are given below in Proposition~\ref{prop.cov-deriv}).

\begin{definition}[Levi--Civita connection]
The \emph{Levi--Civita connection} $\nablac$ on $\left(\Rd,g\right)$ is the unique affine connection, that is:
\begin{enumerate}
    \item It is \emph{torsion-free}, 
    \[
    \nablac_X Y - \nablac_Y X = [X,Y]
    \]
    for all vector fields $X,Y$, where $[X,Y]:=XY-YX$ denotes the Lie bracket.
    \item It is \emph{compatible with the metric}, in the sense that $\nablac g = 0$, i.e.,
    \begin{equation}\label{metcompact}
        \nablac_\ell g_{ij} = 0 \quad \text{for all} \quad i,j,\ell \in \{1,\dots,d\}.
    \end{equation}
\end{enumerate}
These conditions imply that the parallel transport preserves the Riemannian inner product, and that raising and lowering indices commutes with covariant differentiation. In particular, we have
\begin{equation}\label{eq: inv metric comp}
 \nablac_\ell M^{ij} = 0   
\end{equation}
for all $i,j,\ell \in \{1,\dots,d\}$.
\end{definition}

From the definition of the Levi--Civita connection we can derive, see Corollary 4.3.1 in \cite{jostRiemannianGeometryGeometric2011}, an expression for the corresponding \emph{Christoffel symbols}, namely
\begin{equation*}
    \Gamma_{ij}^k = \frac12\, g^{k\ell} \left( \partial_i g_{j\ell} + \partial_j g_{i\ell} - \partial_\ell g_{ij} \right).
\end{equation*}Note that these symbols encode how coordinate basis vectors change from point to point. Moreover we have the following.

\begin{proposition}[Trace of the Christoffel symbols]\label{prop:gamma-trace}
We recall that $\omega_M= \det(M^{-1})$ is the Riemannian volume density.  
The contracted Christoffel symbols satisfy
\begin{equation}\label{eq:gamma-trace}
  \Gamma^{k}_{\,k j}
  \;=\; \tfrac12\,\partial_j \log\omega_M
  \;=\;  \frac{1}{\sqrt{\omega_M}}\, \partial_j \big(\sqrt{\omega_M}\big).
\end{equation}
\end{proposition}

\begin{proof}
This identity is a standard property of the Levi--Civita connection; see display~(4.7.6) in~\cite[Sec.~4.7]{weinberg1972gravitation}.
\end{proof}

From the Levi--Civita definition and the expression for the Christoffel symbols, one obtains the following formulas in coordinates for the covariant derivative acting on tensor fields.

\begin{proposition}[Coordinate expression of the covariant derivative]
\label{prop.cov-deriv}
For the Levi--Civita connection $\nablac$ associated with $(\mathbb{R}^d,g)$, the covariant derivative acts on tensor fields as follows:
\begin{align}
    \nablac_k \, f &= \partial_k f, & &\text{for a scalar function $f$}, \label{eq.cov-sca} \\
    \nablac_k \, X^j &= \partial_k X^j + \Gamma_{kl}^j X^l, & &\text{for a vector field $X = X^j \partial_j$}, \label{eq.cov-tens10} \\
    \nablac_k \, A^{ij} &= \partial_k A^{ij} + \Gamma_{k l}^i A^{l j} + \Gamma_{k l}^j A^{il}, & &\text{for a $(2,0)$-tensor $A$},
    \label{eq.cov-tens20} 
\end{align}
\end{proposition}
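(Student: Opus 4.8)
The plan is to obtain all three formulas from the defining properties of the Levi--Civita connection together with its canonical extension to the full tensor algebra. Recall that $\nablac$, initially defined as an affine connection on vector fields, extends uniquely to a derivation on all tensor fields that (i) agrees with the ordinary derivative on scalars, $\nablac_k f = \partial_k f$, (ii) obeys the Leibniz rule with respect to tensor products, and (iii) commutes with contractions; see~\cite[Ch.~4]{jostRiemannianGeometryGeometric2011}. The scalar identity~\eqref{eq.cov-sca} is then precisely property~(i), so nothing is to be proved there. For the remaining two identities I would simply compute in the Euclidean coordinate frame $\{\partial_k\}$, using that the Christoffel symbols already introduced are by definition the connection coefficients in this frame, $\nablac_{\partial_k}\partial_l = \Gamma^j_{kl}\,\partial_j$.

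For the vector-field case~\eqref{eq.cov-tens10}, write $X = X^l\partial_l$ with smooth coefficient functions $X^l$. Since $\nablac_{\partial_k}$ is $\mathbb{R}$-linear and satisfies the Leibniz rule, with the scalar derivative acting on the coefficients via~(i),
\[
\nablac_{\partial_k} X \;=\; (\partial_k X^l)\,\partial_l \;+\; X^l\,\nablac_{\partial_k}\partial_l \;=\; (\partial_k X^l)\,\partial_l \;+\; X^l\,\Gamma^j_{kl}\,\partial_j ,
\]
and relabelling the dummy index in the first term yields the $j$-th component $\partial_k X^j + \Gamma^j_{kl} X^l$, which is~\eqref{eq.cov-tens10}. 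For the $(2,0)$-tensor case~\eqref{eq.cov-tens20}, write $A = A^{ij}\,\partial_i\otimes\partial_j$ and apply the Leibniz rule of property~(ii) together with the vector-field formula just established to each factor:
\[
\nablac_{\partial_k} A \;=\; (\partial_k A^{ij})\,\partial_i\otimes\partial_j \;+\; A^{ij}\,\Gamma^l_{ki}\,\partial_l\otimes\partial_j \;+\; A^{ij}\,\Gamma^l_{kj}\,\partial_i\otimes\partial_l .
\]
Relabelling the summation indices so that every term carries the factor $\partial_i\otimes\partial_j$ collects the coefficient into $\partial_k A^{ij} + \Gamma^i_{kl} A^{lj} + \Gamma^j_{kl} A^{il}$, which is~\eqref{eq.cov-tens20}.

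I do not expect a genuine obstacle: the statement is a classical bookkeeping identity, and the only points requiring care are fixing the canonical extension of $\nablac$ to tensor fields (whose existence and uniqueness are standard and can be cited from~\cite{jostRiemannianGeometryGeometric2011}) and keeping the sign conventions and dummy-index relabellings straight. One small point worth flagging is that it suffices to verify the formulas on the coordinate frame term by term precisely because $\nablac_{\partial_k}$ is only $\mathbb{R}$-linear (not $C^\infty$-linear) in its tensor argument, so the local expansion of a tensor field into smooth coefficients times frame elements, combined with the Leibniz rule, is exactly what makes the computation close.
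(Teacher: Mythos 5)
Your proof is correct, but it takes a different route from the paper: the paper does not derive these formulas at all, it simply cites them as standard coordinate expressions (displays (4.7.1), (4.7.3), and the display between (4.7.8) and (4.7.9) in Weinberg's book), whereas you give a self-contained derivation from the canonical extension of the Levi--Civita connection to the tensor algebra. Your computation is sound: expanding $X=X^l\partial_l$ and $A=A^{ij}\,\partial_i\otimes\partial_j$ in the coordinate frame, using $\nablac_{\partial_k}\partial_l=\Gamma^j_{kl}\partial_j$, the Leibniz rule, and the fact that $\nablac_k$ acts as $\partial_k$ on the scalar coefficients, then relabelling dummy indices, reproduces exactly \eqref{eq.cov-tens10} and \eqref{eq.cov-tens20}; and your observation that the frame-by-frame expansion is legitimate because $\nablac_{\partial_k}$ is only $\mathbb{R}$-linear in the differentiated argument is the right point of care. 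Two minor remarks: property (iii) (commuting with contractions) is not actually used in your argument, since the proposition only concerns purely contravariant tensors --- it would only be needed to justify the general $(p,q)$ formula \eqref{eq.cov-tenspq} stated in the subsequent remark; and what your approach buys over the paper's is a verification of the index and sign conventions in the paper's own notation, at the cost of a short computation the paper chose to outsource to the literature.
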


\begin{proof}
These formulas are standard coordinate expressions for the Levi--Civita connection; see, for example,~\cite[Section~4.7]{weinberg1972gravitation}. 
In particular, equation~\eqref{eq.cov-sca} corresponds to display~(4.7.1) in~\cite{weinberg1972gravitation}, equation~\eqref{eq.cov-tens10} corresponds to display~(4.7.3), and equation~\eqref{eq.cov-tens20} appears in the unnumbered display between~(4.7.8) and~(4.7.9).
\end{proof}

\begin{remark}
For a general $(p,q)$-tensor $T$ we can also obtain a coordinate expression for its covariant derivative. Namely
\begin{equation}\label{eq.cov-tenspq}
 (\nablac_k T)^{i_1 \dots i_p}{}_{j_1 \dots j_q}
    = \partial_k T^{i_1 \dots i_p}{}_{j_1 \dots j_q}
    + \sum_{\alpha=1}^{p} \Gamma_{k l}^{i_\alpha} T^{i_1 \dots l \dots i_p}{}_{j_1 \dots j_q}
    - \sum_{\beta=1}^{q} \Gamma_{k j_\beta}^{l} T^{i_1 \dots i_p}{}_{j_1 \dots l \dots j_q}.   
\end{equation}
We refer to~\cite[Chap.~4,Prop.~4.18]{lee2018introduction} for more details.
\end{remark}

\begin{definition}[Row covariant divergence of a matrix field]\label{def: cov div}
Let $A = \left(A^{j\ell}\right)_{1 \le j,\ell \le d}$ be a smooth matrix field on $(\mathbb{R}^d,g)$, i.e., a smooth map $x \mapsto A(x)$ whose entries $A^{j\ell}(x)$ are real-valued functions. 
The \emph{covariant divergence} of $A$ is defined as the natural generalization of the column-wise divergence of a matrix in Euclidean space, obtained by contracting the covariant derivative over the matrix second (column) index:
\begin{equation}\label{eq: def cov div}
    (\nablac \cdot A)_j := \partial_i A^{ji} + \Gamma_{ik}^i A^{j k} = (\nabla \cdot A)_j + \Gamma_{ik}^i A^{jk},
\end{equation}
where $\nabla \cdot A$ is the Euclidean divergence of $A$ as defined in \cite[Appendix~A,pag.~300]{pavliotis2014stochastic}. 
In Euclidean coordinates with the standard (Cartesian) metric, all \emph{Christoffel symbols} vanish, and this reduces to the divergence taken along the rows of $A$.
\end{definition}

\begin{remark}\label{RemarkCovDivVsDeriv}
In the Euclidean setting, that is, when the Cartesian metric is used, the divergence of a smooth matrix field $A$ can be defined either \emph{column-wise} or \emph{row-wise}. 
For instance, in~\cite[Sec.~1.2]{bronasco2025efficient} the divergence of $A$ is defined as the vector whose $j$th component is the divergence of the $j$th \emph{column} of $A$,
\begin{equation*}
    (\nabla \cdot A)_j = \partial_i A^{ij},
\end{equation*}
where the Einstein summation convention is used.
Alternatively, in~\cite[Appendix~A, p.~300]{pavliotis2014stochastic} the divergence of $A$ is defined \emph{row-wise} as
\begin{equation}\label{eq.rowwisecovdiv}
    (\nabla \cdot A)_j = \partial_i A^{ji}.
\end{equation}
This latter convention is particularly convenient, since it allows one to express the drift corrections associated with different noise interpretations in a compact matrix form (see~\eqref{DriftDictionary} in Appendix~\ref{sec:Conversion}).

In the geometric setting, i.e., for an arbitrary metric, the notion of divergence depends on the tensor type of $A$. 
As discussed in~\cite[Chap.~1, Def.~4.27]{marsden1994mathematical} or more abstractly in \cite[Chap.~5,Prob.~5-16]{lee2018introduction}, if $A$ is a $(p,q)$--tensor, its \emph{covariant divergence} is the $(p-1,q)$--tensor obtained by contracting the last contravariant index of its covariant derivative,
\begin{equation}\label{eq:def.covdiv.marsden}
    (\nablac \cdot A)^{i_1 \dots i_{p-1}}{}_{j_1 \dots j_q}
    := (\nablac_{i_p} A)^{i_1 \dots i_p}{}_{j_1 \dots j_q},
\end{equation}
where $(\nablac_{i_p} A)^{i_1 \dots i_p}{}_{j_1 \dots j_q}$ is given by~\eqref{eq.cov-tenspq}, again following the Einstein summation convention. 

Our motivation for introducing a distinct \emph{row covariant divergence} stems from the fact that the standard covariant divergence~\eqref{eq:def.covdiv.marsden} vanishes identically for the co-metric tensor $M = \sigma \sigma^\top$ (viewed as a $(2,0)$--tensor) due to the metric-compatibility condition~\eqref{eq: inv metric comp}. 
To capture the correct geometric analogue of the Euclidean (row-wise) divergence used in~\cite{pavliotis2014stochastic}, we therefore define the \emph{row covariant divergence} of a $(2,0)$--tensor $A$ by viewing each of its rows $A^{j(\cdot)}$ as a vector field and contracting its covariant derivative accordingly, namely
\begin{equation*}
    (\nablac \cdot A)_j := \nablac_i (A^{j(\cdot)})^i.
\end{equation*}
\end{remark}

The next result shows that our \emph{row covariant divergence} reproduces exactly the compact form of the noise correction as used in~\cite[Chap.~3,pag.~63]{pavliotis2014stochastic}: in the key combination below, all Christoffel contributions cancel.

\begin{proposition}\label{prop:row-div-compat}
Let $\sigma=(\sigma^{i\ell})$ be a smooth field with one contravariant manifold index $i\in\{1,\dots,d\}$ and one auxiliary Euclidean index $\ell\in\{1,\dots,m\}$, and let $M=\sigma\sigma^\top$ with components $M^{ij}=\sum_{\ell}\sigma^{i\ell}\sigma^{j\ell}$. 
Then, componentwise for $j=1,\dots,d$,
\begin{equation}\label{eq:correction}
\bigl[\nablac\!\cdot(\sigma\sigma^\top)\bigr]_j \;-\; \bigl[\sigma\,(\nablac\!\cdot\sigma^\top)\bigr]_j
\;=\;
\bigl[\nabla\!\cdot(\sigma\sigma^\top)\bigr]_j \;-\; \bigl[\sigma\,(\nabla\!\cdot\sigma^\top)\bigr]_j,
\end{equation}
where $\nablac \cdot$ is the row covariant divergence from Definition~\ref{def: cov div}, and $\nabla \cdot$ is the \emph{Euclidean} row-wise divergence of \eqref{eq.rowwisecovdiv}. 
\end{proposition}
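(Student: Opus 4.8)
The plan is to expand both sides of \eqref{eq:correction} in local coordinates using Proposition~\ref{prop.cov-deriv} and Definition~\ref{def: cov div}, and to show that the Christoffel contributions on the left-hand side cancel termwise. First I would write out $\bigl[\nablac\!\cdot(\sigma\sigma^\top)\bigr]_j$ by treating the $j$th row of $M=\sigma\sigma^\top$ as a vector field $(M^{j(\cdot)})^i = M^{ji}$ and applying the vector-field formula \eqref{eq.cov-tens10} before contracting: this gives
\[
\bigl[\nablac\!\cdot(\sigma\sigma^\top)\bigr]_j = \partial_i M^{ji} + \Gamma^i_{ik} M^{jk} = \bigl[\nabla\!\cdot(\sigma\sigma^\top)\bigr]_j + \Gamma^i_{ik} M^{jk}.
\]
Next I would expand $\bigl[\sigma\,(\nablac\!\cdot\sigma^\top)\bigr]_j$. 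Here the subtlety is which factor carries the manifold index: writing $\sigma^\top$ with entries $(\sigma^\top)^{\ell i} = \sigma^{i\ell}$, its $\ell$th row is the vector field $i \mapsto \sigma^{i\ell}$, so $(\nablac\!\cdot\sigma^\top)^\ell = \partial_i \sigma^{i\ell} + \Gamma^i_{ik}\sigma^{k\ell}$, and then
\[
\bigl[\sigma\,(\nablac\!\cdot\sigma^\top)\bigr]_j = \sigma^{j\ell}\bigl(\partial_i \sigma^{i\ell} + \Gamma^i_{ik}\sigma^{k\ell}\bigr) = \bigl[\sigma\,(\nabla\!\cdot\sigma^\top)\bigr]_j + \Gamma^i_{ik}\,\sigma^{j\ell}\sigma^{k\ell}.
\]

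The key observation is then that $\sigma^{j\ell}\sigma^{k\ell} = M^{jk}$ (summation over the Euclidean index $\ell$), so the two Christoffel terms are literally equal: $\Gamma^i_{ik} M^{jk}$ appears in both expansions. Subtracting the second display from the first, the flat parts reproduce the right-hand side of \eqref{eq:correction} and the curvature terms cancel exactly, which is precisely the claimed identity. I would present this as a short two-line computation, perhaps isolating the cancellation $\Gamma^i_{ik}M^{jk} - \Gamma^i_{ik}\sigma^{j\ell}\sigma^{k\ell} = 0$ as the crux.

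The main thing to be careful about — and the only place where a genuine error could creep in — is the index bookkeeping for $\sigma^\top$ versus $\sigma$: one must be consistent about the fact that the row covariant divergence of a $(2,0)$-tensor differentiates along the \emph{second} (contracted) slot while the manifold index sits in the \emph{first} slot, so that in $\sigma\,(\nablac\!\cdot\sigma^\top)$ the manifold index of $\sigma$ is a spectator (picked up by matrix multiplication) and the covariant derivative acts only on $\sigma^\top$ through its manifold index $i$. Once this is pinned down, there is no real obstacle; the content of the proposition is exactly that the single contracted Christoffel symbol $\Gamma^i_{ik}$ factors out of both terms against the same tensor $M^{jk}$, so no appeal to the trace identity \eqref{eq:gamma-trace} or to metric compatibility is even needed for this particular cancellation.
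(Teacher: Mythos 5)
Your proof is correct and follows essentially the same route as the paper: expand both covariant divergences via Definition~\ref{def: cov div}, observe that each picks up the same Christoffel trace term $\Gamma^i_{ik}\sigma^{j\ell}\sigma^{k\ell}=\Gamma^i_{ik}M^{jk}$, and subtract so these cancel, leaving exactly the Euclidean combination. Your phrasing is marginally more direct (you never need the product-rule expansion of $\partial_i(\sigma^{j\ell}\sigma^{i\ell})$ that the paper uses to exhibit the common remainder $(\partial_i\sigma^{j\ell})\sigma^{i\ell}$), but the content and index bookkeeping are identical.
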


\begin{proof}
Using Einstein summation, expand
\[
\bigl[\nablac\!\cdot(\sigma\sigma^\top)\bigr]_j
= \nablac_i(\sigma^{j\ell}\sigma^{i\ell})
= \partial_i(\sigma^{j\ell}\sigma^{i\ell}) + \Gamma^i_{\,ik}\sigma^{j\ell}\sigma^{k\ell}
= (\partial_i\sigma^{j\ell})\sigma^{i\ell} + \sigma^{j\ell}\partial_i\sigma^{i\ell} + \Gamma^i_{\,ik}\sigma^{j\ell}\sigma^{k\ell}.
\]
Also,
\[
\bigl[\sigma\,(\nablac\!\cdot\sigma^\top)\bigr]_j
= \sigma^{j\ell}\bigl(\partial_i\sigma^{i\ell}+\Gamma^i_{\,ik}\sigma^{k\ell}\bigr)
= \sigma^{j\ell}\partial_i\sigma^{i\ell} + \Gamma^i_{\,ik}\sigma^{j\ell}\sigma^{k\ell}.
\]
Subtracting, both $\sigma^{j\ell}\partial_i\sigma^{i\ell}$ and the Christoffel trace terms $\Gamma^i_{\,ik}\sigma^{j\ell}\sigma^{k\ell}$ cancel, leaving
\[
\bigl[\nablac\!\cdot(\sigma\sigma^\top)\bigr]_j - \bigl[\sigma\,(\nablac\!\cdot\sigma^\top)\bigr]_j
= (\partial_i\sigma^{j\ell})\sigma^{i\ell}.
\]
In Euclidean (Cartesian) coordinates, $\nabla\!\cdot$ is just $\partial_i$ acting row-wise, hence
\[
\bigl[\nabla\!\cdot(\sigma\sigma^\top)\bigr]_j
= \partial_i(\sigma^{j\ell}\sigma^{i\ell})
= (\partial_i\sigma^{j\ell})\sigma^{i\ell} + \sigma^{j\ell}\partial_i\sigma^{i\ell},
\quad
\bigl[\sigma\,(\nabla\!\cdot\sigma^\top)\bigr]_j
= \sigma^{j\ell}\partial_i\sigma^{i\ell}.
\]
Subtracting yields the same remainder $(\partial_i\sigma^{j\ell})\sigma^{i\ell}$, which proves \eqref{eq:correction}.
\end{proof}

\section{Algebraic conditions for reversibility} 
\label{sec:MainResult}
The reversibility of the stochastic process obeying~\eqref{eq: noise-Lange} with respect to a given invariant measure imposes strong structural constraints on the interplay between the drift $-\sigma(x)\sigma^T(x) \nabla V$, the volatility $\sigma(x)$  and the choice $\lambda$ of the stochastic integral/noise. The following theorem provides an explicit algebraic criterion under which reversibility holds. 

\begin{theorem}\label{MainThmKlimGeo}
Let $M=\sigma\sigma^T$ and $g=M^{-1}$. Consider the Gibbs measure
\[
G(dx)=\frac{1}{Z_V}\,e^{-V(x)}\,d\mu_M(x),
\]
where 
$d\mu_M(x):=\sqrt{\omega_M(x)}\,dx$ and $\omega_M(x):=\det(M(x)^{-1})$.
Let $X_t$ solve
\begin{equation}\label{KlimSDEGeo}
dX_t=-\,M(X_t)\,\nabla V(X_t)\,dt+\sqrt{2}\,\sigma(X_t)\circ_{\lambda} dW_t.
\end{equation}
Then $G$ is reversible for $X_t$ (i.e., the generator is self-adjoint in $L^2(G)$) if and only if
\begin{equation}\label{eq: geom cons}
(2\lambda-1)\,\nabla^{c}\!\cdot (\sigma \sigma^T) \;=\; 2\lambda\,\sigma\,(\nabla^{c}\!\cdot \sigma^{T}),
\end{equation}
where the covariant divergences are those of Definition~\eqref{def: cov div}.
\end{theorem}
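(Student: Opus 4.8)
The plan is to compute the generator of \eqref{KlimSDEGeo} explicitly in Euclidean local coordinates, compare it to the known form \eqref{eq:1d_rev_gen_intro}--\eqref{eq: inf_gen_geo} of a generator that is symmetric in $L^2(G)$, and extract the algebraic identity \eqref{eq: geom cons} as the exact obstruction to the difference being a symmetric operator. First I would use the conversion formula of Appendix~\ref{sec:Conversion} (the dictionary~\eqref{DriftDictionary}) to rewrite \eqref{KlimSDEGeo} as an It\^o SDE: the $\circ_\lambda$ noise contributes a spurious drift $2\lambda\bigl(\partial_j M^{ji}-\sigma^{i\ell}\partial_k\sigma^{k\ell}\bigr)$, so that the generator is the second-order operator $\loc_\lambda f = M^{ij}\partial_i\partial_j f + b^i\partial_i f$ with $b^i = -M^{ij}(\nabla V)_j + 2\lambda\bigl(\partial_j M^{ji}-\sigma^{i\ell}\partial_k\sigma^{k\ell}\bigr)$; this is exactly \eqref{eq.drift} minus the geometric-correction bookkeeping, and I would make that identification carefully.

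Next I would recall the general criterion for self-adjointness of a second-order operator $L = a^{ij}\partial_i\partial_j + b^i\partial_i$ in $L^2(\rho\,dx)$: writing $L$ in divergence form, symmetry with respect to $\rho\,dx$ is equivalent to $b^i = \partial_j a^{ij} + a^{ij}\partial_j\log\rho$ (the Euclidean detailed-balance condition; see \cite[Ch.~4]{pavliotis2014stochastic}, \cite[Ch.~I]{bakry2013analysis}). Here $a^{ij}=M^{ij}$ and $\rho = \frac{1}{Z_V}e^{-V}\sqrt{\omega_M}$, so $\partial_j\log\rho = -(\nabla V)_j + \tfrac12\partial_j\log\omega_M = -(\nabla V)_j + \Gamma^k_{\,kj}$ by Proposition~\ref{prop:gamma-trace}. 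Therefore the required drift for reversibility is
\begin{equation*}
b^i_{\mathrm{rev}} = \partial_j M^{ij} - M^{ij}(\nabla V)_j + M^{ij}\Gamma^k_{\,kj} = -M^{ij}(\nabla V)_j + \bigl[\nabla^c\!\cdot M\bigr]^i,
\end{equation*}
using $M^{ij}=M^{ji}$ and Definition~\ref{def: cov div} of the row covariant divergence. Reversibility thus holds if and only if $b^i = b^i_{\mathrm{rev}}$, i.e. after cancelling the common term $-M^{ij}(\nabla V)_j$,
\begin{equation*}
2\lambda\bigl(\partial_j M^{ji}-\sigma^{i\ell}\partial_k\sigma^{k\ell}\bigr) = \bigl[\nabla^c\!\cdot(\sigma\sigma^T)\bigr]^i.
\end{equation*}

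Finally I would turn this coordinate identity into the stated covariant form. The left side is $2\lambda$ times $\partial_j M^{ji} - \sigma^{i\ell}\partial_k\sigma^{k\ell}$, which by the computation in the proof of Proposition~\ref{prop:row-div-compat} equals $2\lambda\bigl(\partial_i\sigma^{j\ell}\bigr)\sigma^{i\ell}$, i.e. exactly $2\lambda\bigl(\bigl[\nabla^c\!\cdot(\sigma\sigma^T)\bigr] - \sigma\,(\nabla^c\!\cdot\sigma^T)\bigr)$ by \eqref{eq:correction}; here one uses that the Euclidean and covariant versions of this particular difference agree. Substituting into the previous display and rearranging gives $\bigl[\nabla^c\!\cdot(\sigma\sigma^T)\bigr] - 2\lambda\bigl[\nabla^c\!\cdot(\sigma\sigma^T)\bigr] + 2\lambda\,\sigma\,(\nabla^c\!\cdot\sigma^T) = 0$, that is $(2\lambda-1)\,\nabla^c\!\cdot(\sigma\sigma^T) = 2\lambda\,\sigma\,(\nabla^c\!\cdot\sigma^T)$, which is \eqref{eq: geom cons}. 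I expect the main obstacle to be purely bookkeeping: keeping the noise-induced drift, the $\sqrt{\omega_M}$ weight in $\rho$, and the Christoffel trace all consistent, and in particular verifying cleanly that the geometric-correction term $M^{kj}\Gamma^i_{kj}$ appearing in \eqref{eq.drift} is already accounted for once one writes the generator in the non-covariant form $M^{ij}\partial_i\partial_j + b^i\partial_i$, so as not to double-count it. A secondary point to state carefully is the equivalence between self-adjointness of the generator on a suitable core and the detailed-balance PDE identity, which is standard under the uniform ellipticity and confinement hypotheses of Section~\ref{sec.main_task} but should be cited rather than reproved.
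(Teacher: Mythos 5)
Your proposal is correct and follows essentially the same route as the paper: convert the $\circ_\lambda$ noise to It\^o form via~\eqref{DriftDictionary}, rewrite the noise-induced drift covariantly through Proposition~\ref{prop:row-div-compat}, and compare drifts with the generator that is symmetric in $L^2(G)$. The only (minor) difference is that you obtain the reversible drift $-M\nabla V+\nablac\!\cdot(\sigma\sigma^T)$ from the Euclidean divergence-form symmetry criterion with density $e^{-V}\sqrt{\omega_M}$ together with Proposition~\ref{prop:gamma-trace}, whereas the paper starts from the Laplace--Beltrami/carr\'e-du-champ form of the reversible generator and then uses the contracted metric-compatibility identity~\eqref{eq.cov.div.contract}; the two computations are equivalent and lead to the same term-by-term comparison.
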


\begin{proof}
The proof proceeds by comparing the generator $\loc$ of the geometric SDE~\eqref{KlimSDEGeo} with the reversible generator $\loc_G$ associated with the Gibbs measure~$G(dx)$.  
For a given volatility matrix~$\sigma$, all diffusion generators $\loc$ are completely determined (see~\cite[Chap.~1]{bakry2013analysis}).  
Using Proposition~\ref{prop:row-div-compat}, we express $\loc$ in covariant form—including the noise–interpretation correction—and verify~\eqref{eq: geom cons} by direct term-by-term comparison with~$\loc_G$.

\smallskip
\textbf{Step 1. Reversible generator.}
Using ~\cite[Chap.~1, pag.~47]{bakry2013analysis}, we know that the Gibbs measure $G(dx) \propto e^{-V(x)}\sqrt{\omega_M(x)}\,dx$ is reversible for the diffusion process with generator
\begin{equation*}
\loc_G f
=\Delta_M f + \Gamma(-V,f),
\end{equation*}
where $\Delta_M$ is the Laplace--Beltrami operator associated with the metric $M^{-1}$, and $\Gamma$ is the carré–du–champ operator,
\[
\Gamma(f,g) := \frac12\!\left[\loc(fg) - f\,\loc g - g\,\loc f\right],
\qquad
\Gamma(-V,f) =- M^{ij}\,(\partial_j V)\,\partial_i f.
\]
Using~\eqref{eq:gamma-trace}, the generator $\loc_G$  admits the coordinate representation
\begin{equation}\label{eq:L_G_coord}
\loc_G f
= M^{ij}\,\partial_{ij}^2 f
 -\,\Gamma_{ik}^{\,j}M^{ik} \,\partial_j f
- M^{ij}\,(\partial_i V)\,\partial_j f.
\end{equation}

\smallskip
\textbf{Step 2. Generator of the SDE.}
Let $\loc$ denote the infinitesimal generator of the geometric SDE~\eqref{KlimSDEGeo} including the $\lambda$--Itô correction. By the noise correction formula~\eqref{DriftDictionary} we have that $\loc$ can be written as  
\begin{equation*}
\loc_G f
= M^{ij}\,\partial_{ij}^2 f
+ \Bigl[ 2\lambda\,\bigl(\nabla \cdot(\sigma\sigma^T) - \sigma\,\nabla \cdot\sigma^T\bigr)_j\Bigr]\partial_j f
- M^{ij}\,(\partial_i V)\,\partial_j f.
\end{equation*}
Moreover, by Proposition~\ref{prop:row-div-compat} and the notation in Definition~\ref{def: cov div}, this generator can be written in covariant form as
\begin{equation}\label{eq:L_cov}
\loc f
= M^{ij}\,\partial_{ij}^2 f
+ \Bigl[ 2\lambda\,\bigl(\nablac\!\cdot(\sigma\sigma^T) - \sigma\,\nablac\!\cdot\sigma^T\bigr)_j\Bigr]\partial_j f - M^{ij}\,(\partial_i V)\,\partial_j f.
\end{equation}

\smallskip
\textbf{Step 3. Identification of the geometric correction.}
By the inverse metric compatibility~\eqref{eq: inv metric comp} and~\eqref{eq.cov-tens20},
\[
0=\nablac_\ell M^{ij}
=\partial_\ell M^{ij}+\Gamma_{\ell k}^{\,i}M^{kj}+\Gamma_{\ell k}^{\,j}M^{ik},
\]
which, after contracting $\ell=i$, gives
\begin{equation}\label{eq.cov.div.contract}
-\,\Gamma_{ik}^{\,j}M^{ik}
= \partial_i M^{ij}+\Gamma^i_{\,ik}M^{kj}
= (\nablac\!\cdot(\sigma\sigma^T))_j.
\end{equation}
Hence, the covariant correction term in~\eqref{eq:L_G_coord} coincides with the geometric divergence of $M$. That is, we can write 
\begin{equation}\label{eq:L_G_coord_cov}
\loc_G f
= M^{ij}\,\partial_{ij}^2 f
 + (\nablac\!\cdot(\sigma\sigma^T))_j \,\partial_j f
- M^{ij}\,(\partial_i V)\,\partial_j f.
\end{equation}
\smallskip
\textbf{Step 4.}
Comparing~\eqref{eq:L_G_coord_cov} and~\eqref{eq:L_cov} term by term, and using the fact that reversibility of~$G(dx)$ requires $\loc=\loc_G$, which holds if and only if
\[
2\lambda\!\left[\nablac\!\cdot(\sigma\sigma^T) - \sigma\,\nablac\!\cdot\sigma^T\right]
=\nablac\!\cdot(\sigma\sigma^T),
\]
that is, condition~\eqref{eq: geom cons}.
\end{proof}

\subsection{Interpretation of the algebraic conditions}
In the It\^o interpretation ($\lambda=0$), the reversibility condition~\eqref{eq: geom cons} reduces to
\begin{equation}
    \nablac \cdot M = 0,
\end{equation}
with $M = \sigma \sigma^T$. By relation~\eqref{eq.cov.div.contract}, this is equivalent to
\begin{equation}
\label{eq.harmonic}
 \Gamma^j_{ik}M^{ik} = 0,   
\end{equation}
for all $j \in \{1,\dots,d\}$. In the physics literature this is known as the \emph{de Donder gauge condition}, while in differential geometry it is the requirement that the coordinates are \emph{harmonic}. In other words, reversibility with respect to the Gibbs measure requires that the canonical Euclidean coordinates are harmonic for the metric $g=M^{-1}$ induced by $\sigma$. This equivalence between It\^o reversibility and harmonic coordinates was already emphasized by Graham in his covariant analysis of nonequilibrium diffusions~\cite{graham1977covariant}. The conclusion is that the It\^o convention is overly restrictive for modeling reversible systems with multiplicative noise: unless the Euclidean chart happens to be harmonic for $g$ (a very strong condition, satisfied in particular when $\sigma$ is constant), It\^o diffusions with state-dependent noise generically break reversibility and produce entropy even when the stationary distribution is Gibbs.

In the Klimontovich interpretation, $\lambda = 1$, the reversibility condition~\eqref{eq: inv metric comp} becomes
\begin{equation}\label{eq: geom Klim cond}
  \nablac \cdot(\sigma \sigma^T) = 2  \sigma \nablac \cdot \sigma^T.
\end{equation}
This identity always holds in dimension one and, of course, whenever $\sigma$ is constant. In higher dimensions ($d > 1$), it defines a nontrivial algebraic constraint on $\sigma$. A broad class of reversible systems arises when $\sigma(x)$ is diagonal, i.e.,
\[
\sigma(x) = \operatorname{diag}(\sigma_1(x), \ldots, \sigma_d(x)),
\]
in which case~\eqref{eq: geom Klim cond} is automatically satisfied. More generally, the condition holds for block-diagonal matrices where each block independently satisfies~\eqref{eq: geom Klim cond}, and remains valid under constant orthogonal changes of basis: if
\[
\sigma(x) = U D(x) U^T,
\]
with $U$ orthogonal and constant and $D(x)$ block-diagonal as above, then condition~\eqref{eq: inv metric comp} still holds. Physically, this corresponds to systems where the directions of independent noise are globally defined and do not vary with position. In contrast, if $U$ depends on $x$, the condition generally fails, as the rotation of the noise basis introduces geometric distortions which cannot be compensated. Thus, reversibility under Klimontovich noise is closely linked to the existence of a global orthonormal frame in which the noise acts independently.

In the Stratonovich interpretation, $\lambda = \frac{1}{2}$, the reversibility condition reduces to
\begin{equation}\label{eq: geom Strato cond}
 \sigma \nablac \cdot \sigma^T = 0.
\end{equation}
This condition is again trivially satisfied when $\sigma$ is constant. More generally,~\eqref{eq: geom Strato cond} imposes a geometric constraint on the divergence of the noise field: it requires that each column of $\sigma$ (viewed as a vector field) be divergence-free when projected back through $\sigma$. This can be interpreted as a form of \emph{incompressibility} of the noise, ensuring that the noise-induced drift does not generate any net expansion or contraction of volume in the dynamics. Similar conditions have appeared in the theory of covariant stochastic differential equations in manifolds, particularly in the work of Graham and collaborators \cite{graham1977covariant, graham1985covariant, risken1972solutions}, and more recently in \cite{diosi2024covariant}. In these contexts, condition~\eqref{eq: geom Strato cond} ensures the compatibility between stochastic dynamics and underlying geometric structure.

\subsection{Covariant formulation of It\^o SDEs}
In~\cite{graham1977covariant}, Graham considered an SDE with noise interpreted in the Stratonovich sense and derived a covariant formulation of the associated Fokker-Planck equation. We recall that the Stratonovich differential satisfies the chain rule of differentiation and hence can be seen as a covariant object.

In~\cite{graham1985covariant}, Graham pointed out that the It\^o differential is not covariant (due to the second order correction in It\^o's lemma), and introduced a new re-interpretation of the It\^o SDE that made it ``manifestly covariant''. More precisely, for an It\^o SDE of the form  
\begin{equation}\label{eq: Ito SDE B}
    d X_t = B(X_t)\, dt + \sqrt{2}\, \sigma(X_t) \circ_I d W_t,
\end{equation}
 Graham considered the SDE (written component-wise form for convenience) given by
\begin{equation}\label{eq: SDE Graham in Ito01}
    d X_t^\mu = \left( B^\mu(X_t)\, - \sqrt{\omega_M} \partial_\alpha \left(\frac{M^{\alpha \mu}}{\sqrt{\omega_M}} \right) \right)dt + \sqrt{2}\, \sigma^{\mu\nu} (X_t) \circ_I d W_t^\nu,
\end{equation}
and showed that this new SDE is covariant.

We write
\begin{equation}\label{eq: SDE Graham noise}
    d X_t = B(X_t)\, dt + \sqrt{2}\, \sigma(X_t) \circ_G d W_t,
\end{equation}
to refer to the SDE~\eqref{eq: SDE Graham in Ito01} and use the symbol $\circ_G$ to indicate that the noise is interpreted in the sense of Graham. By this, we mean that the SDE~\eqref{eq: SDE Graham noise} can be seen as an alternative interpretation of noise that corresponds to the It\^o SDE 
\begin{equation}\label{eq: SDE Graham in Ito}
    d X_t = \tilde{B}(X_t)\, dt + \sqrt{2}\, \sigma(X_t) \circ_I d W_t,
\end{equation}
where the Graham-It\^o correction results in the drift $\tilde{B}^\mu=B^\mu - \sqrt{\omega_M} \partial_\alpha \left(\frac{M^{\alpha \mu}}{\sqrt{\omega_M}} \right)$. 

\begin{remark}
We do not claim here that there exists a direct definition (e.g., using stochastic integrals involving possibly spatially inhomogeneous~$\lambda$) of the noise~$\circ_G$. 
Nevertheless, the Graham noise is well defined through its Itô representation.
\end{remark}

In a recent paper~\cite{diosi2024covariant}, under the condition that the parameter $\sigma$ is composed of divergence-free smooth vector fields, Di\'osi found that the Stratonovich SDE 
\begin{equation}\label{eq: SDE diosi with Graham}
    d X_t = \tilde{B}(X_t)\, dt + \sqrt{2}\, \sigma(X_t) \circ_S d W_t
\end{equation}
agrees with the It\^o SDE \eqref{eq: Ito SDE B}. 
More precisely, the covariant condition assumed in~\cite{diosi2024covariant} which in our setting is imposed on the condition 
\begin{equation}\label{diosicond}
    \nablac \cdot \sigma^T = 0,
\end{equation}
implies that the SDE~\eqref{eq: SDE diosi with Graham} and the SDE~\eqref{eq: Ito SDE B} represent the same stochastic evolution. 

The basic idea of~\cite{diosi2024covariant} is that, under condition~\eqref{diosicond}, the Stratonovich-It\^o correction matches the correction term in the drift of the SDE originally imposed by \cite{graham1977covariant} to make the dynamics covariant. That is, for all $1 \leq j \leq d$ 
\begin{equation}\label{Diosimatchingterms}
  \left(  \nabla \cdot (\sigma(X_t) \sigma^T(X_t)) - \sigma(X_t) \nabla \cdot \sigma^T(X_t)  \right)_j =  \sqrt{\omega_M}  \partial_j \left( \frac{1}{\sqrt{\omega_M}} M^{ij}  \right). 
\end{equation}
We now show that the weaker condition 
\begin{equation}\label{StratoSDEcondGeo}
  \sigma \nablac \cdot \sigma^T = 0
\end{equation}
is already enough to conclude~\eqref{Diosimatchingterms}, and as a consequence to show that the Stratonovich SDE~\eqref{eq: SDE diosi with Graham}
agrees with the original It\^o SDE~\eqref{eq: Ito SDE B}.
In order to verify that this is indeed the case, it is enough to develop the right hand side of \eqref{Diosimatchingterms} as follows:
\begin{align}\label{RHSofdiosimatching}
\sqrt{\omega_M}  \partial_j \left( \frac{1}{\sqrt{\omega_M}} M^{ij}  \right) &=   \partial_j  M^{ij} +   M^{ij} \sqrt{\omega_M} \partial_j \left( \frac{1}{\sqrt{\omega_M}}\right) \nn \\
  &=   \partial_j  M^{ij} +   M^{ij}  \Gamma_{kj}^k \nn \\
  &= \nablac \cdot (\sigma(X_t) \sigma^T(X_t))_j,
\end{align}
where in the second line we used~\eqref{eq:gamma-trace} and in the last line~\eqref{eq: def cov div} and the symmetry of $M$.

The fact that the last line of~\eqref{RHSofdiosimatching} corresponds to the Stratonovich-It\^o correction under the weaker condition~\eqref{StratoSDEcondGeo} justifies our claim.\\

\begin{remark}
Notice that this is also true for any other interpretation of noise different from It\^o's noise, as long as we impose the algebraic conditions for reversibility analogous to~\eqref{eq: geom cons}. However, as explained in~\cite{diosi2024covariant}, only the Stratonovich interpretation makes the left-hand side of the SDE (i.e., the stochastic differential) satisfy a chain-rule.
\end{remark}

\section{Averaging via Dirichlet forms}\label{sec:Averaging}

We now show that the Klimontovich noise is preserved under coarse-graining of suitable  fast-slow systems with multiplicative (nonconstant) noise; this result ensures that the macroscopic reversible measure retains a Gibbsian structure compatible with the microscopic model. Specifically, we consider slow-fast systems where the noise sources driving the slow and fast variables are independent and the dynamics are initially reversible with Klimontovich noise. Our approach combines ideas from the projection operator method (in the spirit of Mori (see, e.g.,~\cite{Widder2025aTR}) with the framework of Mosco convergence of Dirichlet forms developed by Kuwae and Shioya~\cite{kuwae2003convergence}. Perhaps surprisingly, the abstract setting of Kuwae and Shioya aligns remarkably well with the projection operator approach, allowing us to rigorously justify that both the interpretation of noise and the reversible structure are preserved under averaging. 
The approach developed here uses advanced tools, notably Mosco convergence, but is overall simple and provides an alternative to a derivation by asymptotic expansion or two-scale convergence~\cite{DDP_2023}.  

\subsection{One-dimensional case}

We consider a prototypical slow-fast system which is structurally of the form~\eqref{eq.SDE-intro-lambda}, where now the fast dynamics are accelerated relative to the slow ones. Specifically, we rescale the noise intensity and drift in the fast variable $Y$ by a factor of $\sqrt{n}$ and $n$, respectively, where $n \gg 1$. The resulting system reads
\begin{align}
    d X_t^n &= - \sigma_1^2(X_t^n,Y_t^n) \, \partial_x V(X_t^n,Y_t^n) \, dt + \sqrt{2}\,\sigma_1(X_t^n,Y_t^n) \circ_K d W_t^1, \label{eq.Xn}\\
    d Y_t^n &= - n\, \sigma_2^2(X_t^n,Y_t^n) \, \partial_y V(X_t^n,Y_t^n) \, dt + \sqrt{2 n} \, \sigma_2(X_t^n,Y_t^n) \circ_K d W_t^2. \label{eq.Yn}
\end{align}
Here, both noise terms are interpreted in the Klimontovich sense, and the slow  variable $X$ and the fast variable $Y$ are coupled through the potential $V$ and the coefficients $\sigma_1$ and $\sigma_2$. In this case, the volatility matrix is diagonal,
\begin{equation*}
    \sigma(x,y) = \begin{bmatrix}
        \sigma_1(x,y) & 0 \\
        0 & \sigma_2(x,y)
    \end{bmatrix}.
\end{equation*}
This form ensures that the noise sources acting on $X_t$ and $Y_t$ are independent.  By Theorem~\ref{MainThmKlimGeo}, for any $C^2$-smooth coefficients $\sigma_1,\sigma_2>0$ the reversibility condition \eqref{eq: geom Klim cond} holds component-wise and the Gibbs measure
\begin{equation}
\label{eq:Gibbs-xy}
\mu(dx\,dy)=Z_V^{-1}\,e^{-V(x,y)}\,dx\,dy
\end{equation}
is thus reversible for~\eqref{eq.Xn}--\eqref{eq.Yn}, uniformly in $n$.

Throughout this section, we make the following assumptions.
\begin{assumption}
The potential $V \colon \mathbb{R}^2 \to \mathbb{R}$ and the coefficients $\sigma_1, \sigma_2 \colon \mathbb{R}^2 \to (0,\infty)$ are $C^2$ and $V$ is confining in the sense that $Z_{V,\beta} := \int_{\mathbb{R}^2} \exp(-\beta V(x,y))\,dx\,dy < \infty$ for all $\beta > 0$. 

Furthermore, we assume that for each fixed $x \in \mathbb{R}$, the fast process
\[
dY_t = -\sigma_2^2(x,Y_t)\,\partial_y V(x,Y_t)\,dt + \sqrt{2}\,\sigma_2(x,Y_t)\circ_K dW_t^2
\]
is ergodic with unique invariant measure $\mu^x(dy) \propto \exp(-V(x,y))\,dy$.
\end{assumption}

These conditions ensure that the SDE system is well-posed, the associated weighted Sobolev spaces are Hilbert spaces, which in turn implies that the associated Dirichlet forms are closed~\cite{fukushima2010dirichlet}. In the literature, it is typically required that the volatility coefficients $\sigma_1$ and $\sigma_2$ are locally integrable, to ensure the completeness of the associated weighted Sobolev spaces; see, for example,~\cite{zhikov1998weighted}. In our setting, this condition is automatically satisfied due to the $C^2$-regularity of $\sigma_1$ and $\sigma_2$.

\subsection{Averaging and preservation of Klimontovich structure}
\label{sec:slow-fast-gen}
It is a classical result, see~\cite{khasminskij1968principle, PavlSt08},  and~\cite{duong2025coarsegrainingstochasticdifferential} for an alternative approach, that in the limit $n \to \infty$, the fast variable $Y_t^n$ equilibrates rapidly, and the slow component $X_t^n$ converges to an effective process $\bar{X}_t$ governed by the SDE
\begin{equation}
\label{eq.effSDE}
    d \bar{X}_t = \bar{b}(\bar{X}_t) \, dt + \sqrt{2} \, \bar{\sigma}_1(\bar{X}_t) \, d W_t,
\end{equation}
where the effective drift and volatility coefficients are given by
\begin{align}
    \bar{b}(x) &= \frac{1}{Z_V(x)} \int_\R \left( \partial_x \sigma_1^2(x,y) - \sigma_1^2(x,y) \, \partial_x V(x,y) \right) \exp(-V(x,y)) \, dy \label{eq.eff.b} \intertext{and}
    \bar{\sigma}_1(x) &= \left( \frac{1}{Z_V(x)} \int_\R \sigma_1^2(x,y) \exp(-V(x,y)) \, dy \right)^{1/2},
    \label{eq.eff.sigma}
\end{align}
with the normalization $Z_V(x) = \int_\R \exp(-V(x,y)) \, dy$ arising as the marginal of the original Gibbs measure with respect to $y$. For more details, we refer to~\cite{khasminskij1968principle} (the explicit expressions for $b$ and $\sigma$ rely on the ergodicity in the fast variable).

A key observation is that the limiting dynamics for $\bar{X}_t$ can also be written in Klimontovich form. Indeed, differentiating $\bar{\sigma}_1^2(x)$ given by~\eqref{eq.eff.sigma} and using~\eqref{eq.eff.b} yields
\begin{equation*}
    \partial_x \left( \bar{\sigma}_1^2(x) \right) = \bar{b}(x) + \bar{\sigma}_1^2(x) \frac{\partial_x Z_V(x)}{Z_V(x)}.
\end{equation*}
This allows us to recast the effective SDE~\eqref{eq.effSDE} as
\begin{equation*}
    d \bar{X}_t = -\bar{\sigma}_1^2(\bar{X}_t) \, \partial_x \ln \left( \int_\R \exp(-V(\bar{X}_t,y)) \, dy \right) \, dt + \sqrt{2} \, \bar{\sigma}_1(\bar{X}_t) \circ_K d W_t.
\end{equation*}

This computation shows in a transparent manner that both reversibility and the interpretation of noise are preserved under averaging: the limiting process is again reversible with respect to the marginal Gibbs measure
\begin{equation}
    \label{eq.gibbs-marg}
    \mu_\infty(dx) \propto \int_\R \exp(-V(x,y))\, dy,
\end{equation}
and the effective SDE retains the Klimontovich structure.

\subsection{Averaging via Dirichlet forms}
\label{SecWarmUpDirforms}

Beyond the classical approach using generators as presented in Section~\ref{sec:slow-fast-gen}, the averaging principle can also be rigorously justified within the Dirichlet form framework. This perspective is particularly powerful, as it connects probabilistic dynamics to variational structures and provides a natural setting for taking singular limits such as $n \to \infty$. The theory of Dirichlet forms was applied to the study of averaging--in particular in connection to the Freidlin-Wentzell limit--in~\cite{BarretRenesse2014}.

Due to reversibility, see, for example~\parencite[(1.11.8)]{bakry2013analysis}, the infinitesimal generator of the two-dimensional process~\eqref{eq.Xn}--\eqref{eq.Yn} takes the form
\begin{equation*}
    \loc_n F(x,y) = e^{V(x,y)} \sum_{i,j=1}^2  \partial_i \left( e^{-V(x,y)} M_n^{ij}(x,y) \partial_{j} F(x,y) \right),
\end{equation*}
where the diffusion matrix is
\begin{equation}
\label{eq.Mn}
    M_n(x,y)= \begin{bmatrix}
        \sigma_1^2(x,y) & 0 \\
        0 & \sqrt{n} \,  \sigma_2^2(x,y)
    \end{bmatrix}.
\end{equation}

Instead of working only with the generator, it is often advantageous—especially for rigorous convergence results—to consider the associated Dirichlet forms. For each $n$, the Dirichlet form $(E_n, D(E_n))$ on $L^2(\R^2, \mu)$ is given by
\begin{align*}
    E_n(F,G) &= -\int_{\R^2} G(x,y) \, \loc_n F(x,y) \, \mu(dx\,dy), \intertext{with}
    D(E_n) &= \left\{ F \in L^2(\R^2,\mu) \bigm| E_n(F,F) < \infty \right\},
\end{align*}
where $\mu(dx\,dy)$ is the reversible Gibbs measure.

In the averaging limit, we expect the slow variable $\bar{X}_t$ to evolve according to an effective Dirichlet form $(E, D(E))$ on $L^2(\R, \mu_\infty)$, where
\begin{align*}
    E(f) &= -\int_{\R} f(x) \, \loc f(x) \, \mu_\infty(dx), \intertext{with}
    D(E) &= \left\{ f \in L^2(\R, \mu_\infty) \bigm| E(f) < \infty \right\},
\end{align*}
and, using the results of \cite{khasminskij1968principle}, the limiting generator is
\begin{align*}
\loc f(x) = \left( -\bar{\sigma}_1^2(x) \, \partial_x \ln{\left(\int_\R \exp{(-V(x,y))}\, dy\right)} + \partial_x ( \bar{\sigma}_1^2(x) )\right) f'(x) + \bar{\sigma}_1^2(x) f''(x).
\end{align*}

\subsection{Convergence of Hilbert spaces}

A subtlety arises because the Dirichlet forms $E_n$ and $E$ are defined on different spaces: functions of $(x,y)$ versus functions of $x$ alone. The rigorous connection between them is established using the notion of Mosco convergence, as developed by Kuwae and Shioya~\cite{kuwae2003convergence}. This framework allows us to systematically compare Dirichlet forms on varying Hilbert spaces and to justify the convergence of the slow-fast system to its averaged limit not just at the level of trajectories, but also at the level of variational (energetic) structures.
For the slow-fast system studied here, the sequence of Dirichlet forms $(E_n, D(E_n))$ acts on functions of both $(x, y)$, while the limiting Dirichlet form $(E, D(E))$ acts on functions of $x$ alone. This mismatch of domains requires a careful analysis of the convergence of the underlying Hilbert spaces.

For each $n$, we consider the weighted Sobolev space
\begin{equation*}
H_n^1(\mu):= \left\{ f \in L^2(\R^2,\mu) \bigm| \int_{\R^2} \sum_{ij} M_n^{ij} (\partial_i f) (\partial_j f) \,  \mu(dx,dy) < \infty \right\}, 
\end{equation*}
where $M_n$, given in~\eqref{eq.Mn}, encodes the anisotropic volatility coefficients. The limiting space is the weighted Sobolev space
\begin{equation*}
H^1(\mu_\infty):= \left\{ f \in L^2(\R,\mu_\infty) \bigm| \int_{\R}  \bar{\sigma}_1^2(x) (\partial_x f)^2  \mu_\infty(dx) < \infty \right\}, 
\end{equation*}
where $\bar{\sigma}_1^2(x)$ is the effective (averaged) volatility coefficient and $\mu_\infty$ is the marginal Gibbs measure~\eqref{eq.gibbs-marg} for $x$.

Both $H_n^1(\mu)$ and $H^1(\mu_\infty)$ are Hilbert spaces when equipped with the inner products
\begin{align*}
    \langle f, g \rangle_{ H_n^1(\mu)} &=  \langle f, g \rangle_{L^2(\mu)} + E_n(f,g) \intertext{and}
    \langle f, g \rangle_{ H^1(\mu_\infty)} &=  \langle f, g \rangle_{L^2(\mu_\infty)} + E(f,g),
\end{align*}
respectively. To connect these spaces, we define the linear embedding $\Phi_n\colon H^1(\mathbb{R},\mu_\infty) \to H_n^1(\mathbb{R}^2,\mu)$ by
\begin{equation*}
    (\Phi_n f)(x,y) = f(x),
\end{equation*}
so that $\Phi_n f$ is constant in $y$. This operator allows us to compare functions in the limiting space with their ``lifts'' to the pre-limit space.

\begin{remark}\label{ProjDirformagrees}
A key property is that $\Phi_n$ is an isometry with respect to both the $L^2$ and $H^1$ norms. Furthermore, for all $f \in H^1(\mathbb{R},\mu_\infty)$, by~\eqref{eq.eff.sigma} and~\eqref{eq.gibbs-marg}, 
\begin{align*}
  E_n(\Phi_n f) &= \int_{\mathbb{R}^2} \sigma_1^2(x,y) \, (\partial_x f(x))^2 \, \frac{1}{Z_V} \exp(-V(x,y)) \, dx \, dy \\
  &= \int_{\mathbb{R}} (\partial_x f(x))^2 \, \bar{\sigma}_1^2(x) \, \mu_\infty(dx) \\
  &= E(f),
\end{align*}
where $\bar{\sigma}_1^2(x)$ is the effective volatility coefficient and $\mu_\infty$ is the marginal measure. Thus, the structure of the limiting space is naturally embedded in the pre-limit spaces via $\Phi_n$.
\end{remark}

This observation leads directly to the following proposition.

\begin{proposition}\label{ConvHilSpaceWarmup}
The sequence of Hilbert spaces $\lbrace H_n^1(\mu) \bigm| n \in \N \rbrace$ converges, in the sense of Kuwae and Shioya~\cite{kuwae2003convergence}, to the Hilbert space $H^1(\mu_\infty)$, with the embedding given by $\Phi_n$.
\end{proposition}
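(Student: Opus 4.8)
The plan is to verify directly the definition of convergence of Hilbert spaces in the sense of Kuwae and Shioya (recalled in Appendix~\ref{appendix.mosco}): we must exhibit a dense subspace $\mathcal{C} \subset H^1(\mu_\infty)$ together with the linear maps $\Phi_n \colon H^1(\mu_\infty) \to H_n^1(\mu)$ such that $\lim_{n\to\infty} \|\Phi_n f\|_{H_n^1(\mu)} = \|f\|_{H^1(\mu_\infty)}$ for every $f \in \mathcal{C}$ (and hence, by the isometry, for all $f$). The crucial simplification in our setting is Remark~\ref{ProjDirformagrees}: since $\sigma$ is block-diagonal and $\Phi_n f$ is constant in $y$, only the $(1,1)$-entry $M_n^{11} = \sigma_1^2(x,y)$ of the diffusion matrix~\eqref{eq.Mn} contributes to $E_n(\Phi_n f)$ — the $\sqrt{n}\,\sigma_2^2$ entry multiplies $\partial_y(\Phi_n f) = 0$ and drops out. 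Consequently $E_n(\Phi_n f) = E(f)$ and $\|\Phi_n f\|_{L^2(\mu)} = \|f\|_{L^2(\mu_\infty)}$ hold exactly, for every $n$ and every $f \in H^1(\mu_\infty)$, so $\Phi_n$ is in fact an isometry onto its image and the limiting relation on norms is trivially satisfied (it is even exact, not merely asymptotic).

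First I would recall the precise definition from Kuwae--Shioya: a sequence $(H_n, \langle\cdot,\cdot\rangle_{H_n})$ converges to $H$ if there is a dense subset $\mathcal{C} \subseteq H$ and maps $\Phi_n \colon \mathcal{C} \to H_n$ with $\|\Phi_n f\|_{H_n} \to \|f\|_H$ for all $f \in \mathcal{C}$. Second, I would take $\mathcal{C} = H^1(\mu_\infty)$ itself (or, if one prefers a genuinely smaller core, $C_c^\infty(\mathbb{R})$, which is dense in $H^1(\mu_\infty)$ by the confining assumption on the marginal potential and the $C^2$-regularity and local integrability of $\bar\sigma_1^2$, via the standard Meyers--Serrin / weighted-Sobolev density argument referenced through~\cite{zhikov1998weighted, fukushima2010dirichlet}). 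Third, I would check well-definedness of $\Phi_n$: for $f \in H^1(\mu_\infty)$ the lift $(\Phi_n f)(x,y) = f(x)$ lies in $L^2(\mathbb{R}^2,\mu)$ because $\int_{\mathbb{R}^2} f(x)^2 \mu(dx\,dy) = \int_{\mathbb{R}} f(x)^2 \mu_\infty(dx) < \infty$ by Fubini and the definition~\eqref{eq.gibbs-marg} of $\mu_\infty$ as the $y$-marginal of $\mu$; and it lies in $H_n^1(\mu)$ because its weak $y$-derivative vanishes and its weak $x$-derivative is $(\partial_x f)(x)$, so the Dirichlet energy is finite by Remark~\ref{ProjDirformagrees}. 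Fourth, I would invoke the computation of Remark~\ref{ProjDirformagrees} verbatim to conclude $\|\Phi_n f\|_{H_n^1(\mu)}^2 = \|\Phi_n f\|_{L^2(\mu)}^2 + E_n(\Phi_n f) = \|f\|_{L^2(\mu_\infty)}^2 + E(f) = \|f\|_{H^1(\mu_\infty)}^2$, which gives the required norm convergence (here an exact equality, uniformly in $n$).

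The main obstacle is not analytic but definitional: one must be careful that the Kuwae--Shioya notion of convergence of Hilbert spaces used here is the one that later feeds into the Mosco-convergence statement for the Dirichlet forms, and in particular that the same family $\{\Phi_n\}$ is the one with respect to which strong and weak convergence of vectors will subsequently be defined in Section~\ref{sec:Averaging}. Thus the only real content of the proposition is to record that the block-diagonal structure of the noise makes the lift $\Phi_n$ an exact isometry at each finite $n$ — so that the abstract machinery applies with the simplest possible comparison maps — and to certify density of the chosen core $\mathcal{C}$ in $H^1(\mu_\infty)$; both are routine given the standing $C^2$ and confinement assumptions, and no genuine limit $n\to\infty$ needs to be taken at this stage (the $n$-dependence of the weighted space is entirely in the $\partial_y$-block, which $\Phi_n$ never sees).
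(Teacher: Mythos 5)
Your proposal is correct and follows essentially the same route as the paper: the proof there simply observes that, by the computation in Remark~\ref{ProjDirformagrees}, the lift $\Phi_n$ is an exact isometry for both the $L^2$ and the form norms, so the Kuwae--Shioya norm-convergence condition holds with equality for every $n$. Your additional checks (well-definedness of the lift, choice of the dense core, and the marginal/Fubini argument for the $L^2$ isometry) merely make explicit what the paper leaves implicit.
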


\begin{proof}
By construction, $\Phi_n$ is an isometry in $H^1$, so for all $f \in H^1(\mu_\infty)$ and all $n$, i.e.,
\[
    \|\Phi_n f\|_{H_n^1(\mu)} = \|f\|_{H^1(\mu_\infty)}.
\]
\end{proof}

\subsection{Convergence of Dirichlet forms}

Given the convergence of the underlying Hilbert spaces, we can now address the convergence of the Dirichlet forms themselves. 

\begin{theorem}
The sequence of Dirichlet forms $\lbrace (E_n, D(E_n)) \bigm| n \in \N \rbrace$ converges, in the sense of Mosco (see Appendix~\ref{appendix.mosco}), to the Dirichlet form $(E,D(E))$.
\end{theorem}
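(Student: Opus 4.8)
The plan is to verify, for the closed symmetric forms $(E_n,D(E_n))$ on $\mathcal H_n:=L^2(\mathbb R^2,\mu)$ and $(E,D(E))$ on $\mathcal H:=L^2(\mathbb R,\mu_\infty)$ with the isometric embeddings $\Phi_n$ of Proposition~\ref{ConvHilSpaceWarmup}, the two defining conditions of Mosco convergence in the Kuwae--Shioya sense (Appendix~\ref{appendix.mosco}): (I) $\liminf_n E_n(F_n)\ge E(f)$ whenever $F_n\rightharpoonup f$ weakly, and (II) for every $f\in\mathcal H$ there is $F_n\to f$ strongly with $\limsup_n E_n(F_n)\le E(f)$. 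The starting point is that, integrating by parts against $\mu$, the pre-limit form splits as
\[
E_n(F)\;=\;A(F)+\sqrt n\,B(F),\qquad
A(F):=\int_{\mathbb R^2}\sigma_1^2\,(\partial_xF)^2\,d\mu,\quad
B(F):=\int_{\mathbb R^2}\sigma_2^2\,(\partial_yF)^2\,d\mu,
\]
with $A,B$ nonnegative closed quadratic forms on $L^2(\mu)$ that do not depend on $n$. The form-kernel $\{B=0\}$ is exactly the subspace of $y$-independent functions, i.e.\ the range of the fast-fibre conditional expectation $(Pf)(x)=Z_V(x)^{-1}\!\int_{\mathbb R}f(x,y)\,e^{-V(x,y)}\,dy$, and $\Phi_n$ maps $L^2(\mu_\infty)$ isometrically onto this subspace; moreover, by Remark~\ref{ProjDirformagrees}, $A(\Phi_n f)=E(f)$ for all $f\in H^1(\mu_\infty)$. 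Heuristically, then, $E_n\to E$ is the penalization limit in which $A+tB$, as $t\to\infty$, converges to $A$ restricted to $\ker B$, under the identification $\ker B\cong L^2(\mu_\infty)$; the two Mosco conditions make this precise on the varying spaces.

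For condition~(II) I would take, given $f\in D(E)=H^1(\mu_\infty)$ (the case $E(f)=+\infty$ being vacuous), the recovery sequence $F_n:=\Phi_n f$, which is constant in $y$, so $B(F_n)=0$ and hence $E_n(F_n)=A(\Phi_n f)=E(f)$ for every $n$ by Remark~\ref{ProjDirformagrees}. Since $\Phi_n$ is an isometry and $P F_n=F_n$, the sequence $F_n$ converges strongly to $f$ in the Kuwae--Shioya sense, and $\limsup_n E_n(F_n)=E(f)$. This step is immediate.

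The substance is condition~(I). Let $F_n\in L^2(\mu)$ converge weakly to $f\in L^2(\mu_\infty)$; passing to a subsequence realising the $\liminf$, assume $\sup_n E_n(F_n)=:C<\infty$ (otherwise there is nothing to prove). Then $B(F_n)\le C/\sqrt n\to 0$ and $A(F_n)\le C$. Since $\sigma_1^2,\sigma_2^2$ and $e^{-V}$ are continuous and strictly positive, on every compact set $F_n$ is bounded in $H^1$ with $\partial_yF_n\to 0$ in $L^2$, so along a further subsequence $F_n\rightharpoonup\hat f$ in $H^1_{\mathrm{loc}}(\mathbb R^2)$ with $\partial_y\hat f=0$; the bound $\sup_n\|F_n\|_{L^2(\mu)}<\infty$ (valid since weakly convergent sequences are bounded) identifies $\hat f\in L^2(\mu)$ and gives $F_n\rightharpoonup\hat f$ weakly in $L^2(\mu)$ as well. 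Being $y$-independent, $\hat f=\Phi_n g$ for a unique $g\in H^1(\mu_\infty)$; testing $F_n\rightharpoonup\hat f$ against $\Phi_n h$ for $h\in L^2(\mu_\infty)$ and using that $\Phi_n$ is an isometry with $P\hat f=\hat f$ shows $g$ coincides with the prescribed weak limit $f$, so $\hat f=\Phi_n f$ and in particular $f\in H^1(\mu_\infty)=D(E)$. Finally, the weighted bound $A(F_n)\le C$ provides weak compactness of $\partial_xF_n$ in $L^2(\sigma_1^2\,\mu)$ with limit $\partial_x\hat f=\Phi_n(f')$, and lower semicontinuity of the quadratic functional $u\mapsto\int\sigma_1^2u^2\,d\mu$ under weak $L^2(\sigma_1^2\mu)$-convergence yields
\[
E(f)=\int_{\mathbb R^2}\sigma_1^2\,(f')^2\,d\mu\;\le\;\liminf_n\int_{\mathbb R^2}\sigma_1^2\,(\partial_xF_n)^2\,d\mu\;=\;\liminf_n A(F_n)\;\le\;\liminf_n E_n(F_n),
\]
which is~(I).

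The main obstacle lies in condition~(I), and within it in two points that must be handled with some care: first, the compactness extraction and the identification of distributional limits have to be performed in the correct anisotropic, measure-weighted spaces --- this is where the $C^2$-regularity and local uniform ellipticity of $\sigma_1,\sigma_2$ together with the confinement of $V$ (equivalently, local two-sided bounds on the density of $\mu$, and closedness of the associated weighted Sobolev forms) enter; and second, matching the compactness limit $\hat f$ with the prescribed Kuwae--Shioya weak limit $f$ hinges on $\mu_\infty$ being the $x$-marginal of $\mu$ and on $P$, the conditional expectation along the fast fibres, being the canonical projection --- precisely the point where ergodicity of the fast dynamics is used. Once Mosco convergence is established, the Kuwae--Shioya theory yields strong convergence of the associated resolvents and semigroups, so that the slow marginal of~\eqref{eq.Xn}--\eqref{eq.Yn} converges to the effective reversible, Klimontovich dynamics~\eqref{eq.effSDE}; combined with the computation of Section~\ref{sec:slow-fast-gen}, this exhibits the preservation of the Klimontovich structure under coarse-graining.
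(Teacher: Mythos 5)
Your argument is correct, but it follows a genuinely different route from the paper. The paper's proof is a two-line appeal to the Kuwae--Shioya/Kolesnikov machinery: having shown in Proposition~\ref{ConvHilSpaceWarmup} that $\Phi_n$ is an isometry and in Remark~\ref{ProjDirformagrees} that $E_n(\Phi_n f)=E(f)$, it invokes Theorem~\ref{KolesnikovThm} to conclude Mosco convergence. You instead verify the two conditions of Definition~\ref{MoscoDef} directly: the splitting $E_n=A+\sqrt{n}\,B$ into slow and fast parts, the recovery sequence $F_n=\Phi_n f$ (for which $B(F_n)=0$ and $E_n(F_n)=E(f)$ exactly, by Remark~\ref{ProjDirformagrees}) for condition (II), and, for condition (I), the energy bound forcing $B(F_n)\to 0$, local $H^1$ compactness, identification of the limit as the lift of $f$ via the $y$-independence and the marginal structure of $\mu$, and weighted weak lower semicontinuity of $A$. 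What each approach buys: the paper's is shorter but leans entirely on the abstract criterion, and its hypothesis in Theorem~\ref{KolesnikovThm} (that $\mathcal{H}$-weakly convergent sequences with bounded $\xi_n^1$-energy have limits in $D(\xi)$ and converge in $\mathcal{H}^1$) is exactly the kind of compactness statement your liminf argument establishes by hand; in that sense your proof is more self-contained and makes explicit where the positivity/regularity of $\sigma_1,\sigma_2$, the confinement of $V$, and the fact that $\mu_\infty$ is the $x$-marginal of $\mu$ actually enter, at the cost of length. Two cosmetic points: the membership $g\in H^1(\mu_\infty)$ should be asserted only after the lower-semicontinuity step (before it you only know $g\in L^2(\mu_\infty)$ with $\partial_x\hat f\in L^2_{\mathrm{loc}}$), and the factor $\sqrt{n}$ versus $n$ in the fast block is immaterial for the argument since only divergence of the penalization is used.
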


\begin{proof}
Since the embedding $\Phi_n$ is an isometry and the domains are compatible, the conditions for Mosco convergence are satisfied (see Theorem~\ref{KolesnikovThm}). In particular, strong and weak convergence under $\Phi_n$ correspond to the usual notions in the respective spaces, and the forms agree on embedded functions.
\end{proof}

\subsection{Generalization of the averaging principle to higher dimensions}

The averaging principle and Mosco convergence arguments presented above for the two-dimensional slow-fast system extend naturally to higher-dimensional settings. Here, we briefly indicate how the key ideas and results carry over.

Consider a $(d+m)$-dimensional system of SDEs of the form
\begin{align*}
    d X_t^{(n)} &= - \sigma_1(X_t^{(n)}, Y_t^{(n)}) \sigma_1^T(X_t^{(n)}, Y_t^{(n)}) \nabla_x V(X_t^{(n)}, Y_t^{(n)}) \, dt + \sqrt{2}\, \sigma_1(X_t^{(n)}, Y_t^{(n)}) \circ_K d W_t^1, \\
    d Y_t^{(n)} &= - n\, \sigma_2(X_t^{(n)}, Y_t^{(n)}) \sigma_2^T(X_t^{(n)}, Y_t^{(n)}) \nabla_y V(X_t^{(n)}, Y_t^{(n)}) \, dt + \sqrt{2n}\, \sigma_2(X_t^{(n)}, Y_t^{(n)}) \circ_K d W_t^2,
\end{align*}
where $X_t^{(n)}$ is $d$-dimensional, $Y_t^{(n)}$ is $m$-dimensional, and the noises in the $X$ and $Y$ components remain independent.

\begin{proposition}
Suppose the joint process $(X_t^{(n)}, Y_t^{(n)})$ is reversible with reversible measure
\begin{equation*}
    \mu_n(dx\, dy) = \frac{1}{Z_V} \exp\big(-V(x,y)\big)\, dx\, dy.
\end{equation*}
Then the matrices $\sigma_1$ and $\sigma_2$ satisfy the reversibility condition~\eqref{eq: geom cons}.
\end{proposition}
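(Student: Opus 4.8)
The plan is to reduce the higher-dimensional statement to the already-proved Theorem~\ref{MainThmKlimGeo} by exploiting the block-diagonal structure of the volatility matrix. First I would assemble the full $(d+m)\times(d+m)$ volatility
\[
\sigma(x,y) = \begin{bmatrix} \sigma_1(x,y) & 0 \\ 0 & \sqrt{n}\,\sigma_2(x,y) \end{bmatrix},
\]
so that $M = \sigma\sigma^T = \operatorname{diag}(\sigma_1\sigma_1^T,\ n\,\sigma_2\sigma_2^T)$ and $g = M^{-1}$ is correspondingly block-diagonal. The key observation is that for a block-diagonal metric the Christoffel symbols do not mix the two blocks except through derivatives of one block in the directions of the other; what matters for the row covariant divergence appearing in~\eqref{eq: geom cons} is that the block structure is respected. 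I would then invoke Theorem~\ref{MainThmKlimGeo} directly: reversibility of $(X_t^{(n)},Y_t^{(n)})$ with respect to $\mu_n$ is, by that theorem (with $\lambda=1$ since we use $\circ_K$), \emph{equivalent} to the Klimontovich condition~\eqref{eq: geom Klim cond}, i.e.\ $\nablac\cdot(\sigma\sigma^T) = 2\sigma\,\nablac\cdot\sigma^T$, which is the $\lambda=1$ instance of~\eqref{eq: geom cons}. So the hypothesis ``$(X_t^{(n)},Y_t^{(n)})$ is reversible with reversible measure $\mu_n$'' feeds straight into the conclusion.

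Next I would unpack~\eqref{eq: geom cons} into its two diagonal blocks. Because $\sigma$ is block-diagonal, the products $\sigma\sigma^T$ and $\sigma\,\nablac\cdot\sigma^T$ are block-diagonal as well, and the $j$-th component of each covariant divergence, for $j$ in the $x$-block, involves only $\sigma_1$ together with Christoffel symbols of the full block metric. Using Proposition~\ref{prop:row-div-compat}, the Christoffel contributions in the combination $\nablac\cdot(\sigma\sigma^T) - 2\sigma\,\nablac\cdot\sigma^T$ that appears after rearranging~\eqref{eq: geom cons} reduce to Euclidean partial derivatives acting row-wise. Thus the $x$-block of the condition becomes an identity purely in $\sigma_1$ and its Euclidean derivatives (the $\sqrt{n}$ prefactor on the $Y$-block cancels since it is constant in all variables), and likewise the $y$-block becomes an identity purely in $\sigma_2$. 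Each of these is exactly the statement that $\sigma_1$, respectively $\sigma_2$, satisfies~\eqref{eq: geom cons} with $\lambda=1$ on its own factor space — which is what is to be shown.

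The main obstacle I anticipate is bookkeeping the off-diagonal directions: even though $\sigma$ is block-diagonal, $\sigma_1$ depends on $y$ and $\sigma_2$ depends on $x$, so cross derivatives $\partial_{y}\sigma_1$ and $\partial_{x}\sigma_2$ appear, and one must check that these do not obstruct the block-wise split of~\eqref{eq: geom cons}. The point to verify carefully is that in the row covariant divergence $(\nablac\cdot A)_j = \partial_i A^{ji} + \Gamma^i_{ik}A^{jk}$, the index $i$ (being contracted against the second index of $A$) ranges only over the block containing $j$ when $A$ is block-diagonal, so the sum $\partial_i A^{ji}$ for $j$ in the $x$-block only sees $i$ in the $x$-block, and no $\partial_y$ of the $x$-entries enters. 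The Christoffel trace $\Gamma^i_{ik} = \tfrac12\partial_k\log\omega_M$ by~\eqref{eq:gamma-trace}, and $\omega_M = \det M^{-1}$ factors as $\det(\sigma_1\sigma_1^T)^{-1}\cdot(n^{-d_y}\det(\sigma_2\sigma_2^T)^{-1})$, so $\partial_k\log\omega_M$ for $k$ in the $x$-block still sees both $\sigma_1$ and $\sigma_2$; however, by Proposition~\ref{prop:row-div-compat} this Christoffel trace cancels identically in the relevant combination, which is precisely why the computation closes. Once this cancellation is invoked, the remaining identity is the Euclidean one, and the block-wise conclusion follows; the constant factor $\sqrt n$ plays no role since it differentiates to zero and factors out of both sides.
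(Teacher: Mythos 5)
Your overall strategy---exploit the block-diagonal structure and reduce to Theorem~\ref{MainThmKlimGeo}---is the right idea, but the key reduction step does not go through as written. The decisive gap is the claim that, by Proposition~\ref{prop:row-div-compat}, the Christoffel contributions in $\nabla^{c}\!\cdot(\sigma\sigma^T)-2\,\sigma\,\nabla^{c}\!\cdot\sigma^T$ reduce to Euclidean derivatives. That proposition cancels the Christoffel terms only in the combination with coefficient one, $\nabla^{c}\!\cdot(\sigma\sigma^T)-\sigma\,\nabla^{c}\!\cdot\sigma^T$; rewriting the $\lambda=1$ condition as $\bigl[\nabla^{c}\!\cdot(\sigma\sigma^T)-\sigma\,\nabla^{c}\!\cdot\sigma^T\bigr]-\sigma\,\nabla^{c}\!\cdot\sigma^T$ leaves the residual term $\sigma\,\nabla^{c}\!\cdot\sigma^T$, whose Christoffel trace $\Gamma^{i}_{\,ik}=\tfrac12\,\partial_k\log\omega_{M_n}$ is built from the determinant of the full $(d+m)$-dimensional metric and therefore couples the two blocks (for instance, $x$-derivatives of $\det(\sigma_2\sigma_2^T)$ enter the $x$-block equations). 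So the covariant condition for the full matrix does not decouple into purely Euclidean identities for $\sigma_1$ and $\sigma_2$ in the way you assert: the Euclidean and covariant forms of the Klimontovich condition are genuinely different statements, and the bridge between them is exactly what is missing. A related issue is your opening move: the hypothesis is reversibility with respect to $\mu_n\propto e^{-V(x,y)}\,dx\,dy$ (Lebesgue reference measure), whereas Theorem~\ref{MainThmKlimGeo} characterizes reversibility with respect to the Riemannian Gibbs measure $e^{-V}\sqrt{\omega_{M_n}}\,dx\,dy$, so the theorem cannot be quoted verbatim as an equivalence for $\mu_n$ applied to the full $(d+m)$-dimensional system.

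The paper argues differently and more directly: from the reversibility hypothesis it writes the generator in divergence form, $\mathcal{L}_nF=e^{V}\sum_{i,j}\partial_i\bigl(e^{-V}M_n^{ij}\,\partial_jF\bigr)$ (citing (1.11.8) of Bakry--Gentil--Ledoux), compares the resulting drift with the Klimontovich drift of the slow--fast system, and uses the block-diagonal structure of $M_n$ and $\sigma$ so that this comparison produces one identity per block; Theorem~\ref{MainThmKlimGeo} with $\lambda=1$ is then invoked within each block separately, in its respective dimension. Your correct observations---that the Euclidean row divergence of a block-diagonal matrix respects the blocks (the contracted index only ranges over the block containing $j$), and that the constant $\sqrt n$ factors differentiate to zero and drop out---are precisely the ingredients of that block-wise comparison; what needs to be replaced is the detour through the full-dimensional covariant condition, which does not split as claimed.
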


\begin{proof}
By reversibility, see~\parencite[(1.11.8)]{bakry2013analysis}, the infinitesimal generator of the process is given by
\begin{equation*}
    \loc_n F(\bx,\by) = e^{V(\bx,\by)} \sum_{i,j=1}^{d+m} \partial_i \left( e^{-V(\bx,\by)} M_n^{ij}(\bx,\by) \partial_j F(\bx,\by) \right),
\end{equation*}
where, due to the independence of the noises $W^1$ and $W^2$, the matrix $M_n(\bx,\by)$ is block diagonal,
\begin{align*}
    M_n(\bx,\by) = \begin{bmatrix}
        \sigma_1(\bx,\by) \sigma_1^T(\bx,\by) & 0 \\
        0 & n\, \sigma_2(\bx,\by) \sigma_2^T(\bx,\by)
    \end{bmatrix}.
\end{align*}
The block-diagonal structure ensures that the argument reduces to the one-dimensional case for each component. More precisely,
\begin{align*}
    &\begin{bmatrix}
        \nabla \cdot \left(\sigma_1(\bix,\biy) \sigma_1^T(\bix,\biy)\right) & 0 \\
        0 & n \,  \nabla \cdot \left( \sigma_2(\bix,\biy) \sigma_2^T(\bix,\biy)\right)
    \end{bmatrix}
   \nonumber \\
   &= \begin{bmatrix}
       2 \sigma_1(\bix,\biy) & 0 \\
        0 & 2 \sqrt{n} \,  \sigma_2(\bix,\biy)
    \end{bmatrix}
    \begin{bmatrix}
        \nabla \cdot \left(\sigma_1^T(\bix,\biy) \right) & 0 \\
        0 & \sqrt{n} \,  \nabla \cdot \left(\sigma_2^T(\bix,\biy)\right)
    \end{bmatrix},
\end{align*}
and a block-wise comparison enables us to apply Theorem~\ref{MainThmKlimGeo} with $\lambda=1$ to each block. This shows that both $\sigma_1$ and $\sigma_2$ must satisfy the reversibility condition~\eqref{eq: geom cons} in their respective dimensions. 
\end{proof}

All the main constructions and results from the two-dimensional case (Section~\ref{SecWarmUpDirforms}) extend to this higher-dimensional setting. In particular, the effective diffusion parameter for the averaged dynamics is given by
\begin{equation*}
    \frac{1}{Z_V(\bx)} \int_{\R^m} \sigma_1(\bx,\by) \sigma_1^T(\bx,\by) \exp(-V(\bx,\by))\, d\by,
\end{equation*}
where the integral is taken in the sense of Bochner for matrix-valued functions. The convergence of Dirichlet forms and the preservation of reversibility then follow in a straightforward manner.

\subsection{Interpretation of noise for the averaged dynamics}
To conclude, we illustrate in detail how the Klimontovich noise is preserved under averaging, by considering a concrete family of volatility coefficients. Our aim is to construct an example where the Klimontovich reversibility condition~\eqref{eq: geom cons},
\begin{equation}\label{GoldenProp}
   \nabla \cdot \left(  \sigma_1(\bx) \, \sigma_1^T(\bx) \right) = 2  \sigma_1(\bx) \,  \nabla \cdot   \sigma_1^T(\bx),
\end{equation}
continues to hold after averaging over the fast variables.

In analogy to~\eqref{eq.eff.sigma}, the effective volatility matrix for the slow variables, after averaging, can be written as any square root of the Bochner integral, in the sense that
\begin{equation*}
  \bar{\sigma}_1(\bx) \, \bar{\sigma}_1^T(\bx) =  \frac{1}{Z_V(\bx)}   \int_{\R^m}  \sigma_1(\bx,\by) \, \sigma_1^T(\bx,\by) \, \exp(-V(\bx,\by)) \, d\by.
\end{equation*}
We want to ensure that the property \eqref{GoldenProp} is preserved for $\bar{\sigma}_1(\bx)$. To this end, let us consider the class of matrices
\begin{equation*}
    \sigma_1(\bx,\by) = U \Lambda(\bx,\by) U^T,
\end{equation*}
where $U = (u_{ij})$ is a constant orthogonal matrix and $\Lambda(\bx,\by)$ is diagonal. This structure, as discussed earlier, guarantees that $\sigma_1$ satisfies \eqref{GoldenProp} before averaging.

For this form, a natural choice for the square root of the averaged matrix, motivated by~\eqref{eq.eff.sigma}, is
\begin{equation*}
  \bar{\sigma}_1(\bx) :=  U \left(  \int_{\R^m}  \Lambda^2(\bx,\by) \, \frac{1}{Z_V(\bx)} \exp(-V(\bx,\by)) \, d\by \right)^{1/2} U^T.
\end{equation*}
Our goal is to verify that 
\begin{equation}\label{VeriKlimCond}
   \nabla \cdot \left(  \bar{\sigma}_1(\bx) \, \bar{\sigma}_1^T(\bx) \right) = 2  \bar{\sigma}_1(\bx) \,  \nabla \cdot   \bar{\sigma}_1^T(\bx),
\end{equation}
which implies by Theorem~\ref{MainThmKlimGeo} that the averaged dynamics is reversible as well. 

Let us now go through the explicit computation. We begin by computing the divergence of the transpose of $\bar{\sigma}_1(\bx)$:
\begin{align*}
    \left(\nabla \cdot \bar{\sigma}_1(\bx)^T \right)_i 
    &= \sum_{j} \partial_{x_j} \sum_{\ell} u_{j\ell} \left(  \int_{\R^m}  \Lambda_{\ell \ell}^2(\bx,\by) \, \frac{1}{Z_V(\bx)} \exp(-V(\bx,\by)) \, d\by \right)^{1/2} u_{i\ell} \notag \\
    &= \frac{1}{2}\sum_{j} \sum_{\ell} u_{j\ell} \left(  \int_{\R^m}  \Lambda_{\ell \ell}^2(\bx,\by) \, \frac{1}{Z_V(\bx)} \exp(-V(\bx,\by)) \, d\by \right)^{-1/2} \notag \\
    &\qquad \qquad{}\cdot (T_1^{\ell j}(\bx) + T_2^{\ell j}(\bx) + T_3^{\ell j}(\bx)) \, u_{i\ell},
\end{align*}
where, for clarity, we have split the derivative in three distinct contributions,
\begin{align*}
    T_1^{\ell j}(\bx) &=  \int_{\R^m}  \partial_{x_j} \left( \Lambda_{\ell \ell}^2(\bx,\by)\right) \, \frac{1}{Z_V(\bx)} \exp(-V(\bx,\by)) \, d\by, \\
    T_2^{\ell j}(\bx) &=  \frac{\partial_{x_j} Z_V(\bx)}{Z_V(\bx)} \int_{\R^m} \Lambda_{\ell \ell}^2(\bx,\by) \, \frac{1}{Z_V(\bx)} \exp(-V(\bx,\by)) \, d\by \intertext{and}
    T_3^{\ell j}(\bx) &= - \int_{\R^m} \Lambda_{\ell \ell}^2(\bx,\by) \, \partial_{x_j} V(\bx,\by) \, \frac{1}{Z_V(\bx)} \exp(-V(\bx,\by)) \, d\by.
\end{align*}
This splitting allows us to organize the terms in the left-hand side of \eqref{VeriKlimCond} into three corresponding vectors,
\begin{equation*}
 2 \bar{\sigma}_1(\bx) \nabla \cdot \bar{\sigma}_1(\bx)^T = B_1(\bx) + B_2(\bx) + B_3(\bx).
\end{equation*}
We compute the $i$th component of $B_1$ explicitly,
\begin{align*}
  (B_1(\bx))_i &= \sum_j \sum_{\ell} u_{i\ell} \left(  \int_{\R^m}  \Lambda_{\ell \ell}^2(\bx,\by) \, \frac{1}{Z_V(\bx)} \exp(-V(\bx,\by)) \, d\by \right)^{1/2} u_{j\ell} \notag \\
  &\quad \cdot \sum_{r} \sum_{\ell'} u_{r \ell'} \left(  \int_{\R^m}  \Lambda_{\ell' \ell'}^2(\bx,\by) \, \frac{1}{Z_V(\bx)} \exp(-V(\bx,\by)) \, d\by \right)^{-1/2} (T_1^{ \ell' r}(\bx)) \, u_{j \ell'}.
\end{align*}
By the orthogonality of $U$, we have the identity
\begin{equation*}
    \delta_{k \ell}= \sum_{s} u_{s k} u_{s \ell},
\end{equation*}
which allows us to simplify the expression for $(B_1(\bx))_i$,
\begin{align*}
  (B_1(\bx))_i &=  \sum_{\ell} \sum_{r} u_{i\ell}  T_1^{\ell r}(\bx)    u_{r \ell}.
\end{align*}
The same reasoning applies to the other two terms, giving
\begin{align*}
  (B_2(\bx))_i &=  \sum_{\ell} \sum_{r} u_{i\ell}  T_2^{\ell r}(\bx)    u_{r \ell}, \\
  (B_3(\bx))_i &=  \sum_{\ell} \sum_{r} u_{i\ell}  T_3^{\ell r}(\bx)    u_{r \ell}.
\end{align*}

Now, let us compute the $i$th component of the left hand side of \eqref{VeriKlimCond} directly,
\begin{multline*}
\left(    \nabla \cdot \left(  \bar{\sigma}_1(\bx) \, \bar{\sigma}_1^T(\bx) \right) \right)_i  
  = \sum_{r} \partial_{x_r} \sum_{\ell} u_{r \ell} \left(  \int_{\R^m}  \Lambda_{\ell \ell}^2(\bx,\by) \, \frac{1}{Z_V(\bx)} \exp(-V(\bx,\by)) \, d\by \right) u_{i\ell} \notag \\
  = \sum_{r}  \sum_{\ell} u_{r \ell} \left(  \int_{\R^m}  \partial_{x_r} \left(\Lambda_{\ell \ell}^2(\bx,\by) \right) \, \frac{1}{Z_V(\bx)} \exp(-V(\bx,\by)) \, d\by \right) u_{i\ell} \notag \\
  \quad + \sum_{r}  \sum_{\ell} u_{r \ell} \frac{\partial_{x_r}Z_V(\bx)}{Z_V(\bx)} \left(  \int_{\R^m}  \Lambda_{\ell \ell}^2(\bx,\by) \, \frac{1}{Z_V(\bx)} \exp(-V(\bx,\by)) \, d\by \right) u_{i\ell} \notag \\
  \quad - \sum_{r}  \sum_{\ell} u_{r \ell} \left(  \int_{\R^m}  \Lambda_{\ell \ell}^2(\bx,\by) \, \partial_{x_r}\left( V(\bx,\by)\right) \, \frac{1}{Z_V(\bx)} \exp(-V(\bx,\by)) \, d\by \right) u_{i\ell} \notag \\
  = (B_1(\bx)+B_2(\bx)+B_3(\bx))_i,
\end{multline*}
where each term matches precisely the corresponding $B_j(\bx)$ above.

This computation confirms that, for this physically relevant class of noise coefficients, the Klimontovich reversibility condition is indeed preserved under averaging. Thus, the independence and structural properties of the noise are maintained. This feature of the Klimontovich noise might make it a more natural choice for modelling processes which appear as limit of a coarse-graining procedure.

\paragraph{Acknowledgments} G.A.P. is partially supported by an ERC-EPSRC Frontier Research Guarantee through Grant No. EP/X038645, ERC Advanced Grant No. 247031 and a Leverhulme Trust Senior Research Fellowship, SRF$\backslash$R1$\backslash$241055. J.Z. gratefully acknowledges funding by the US Army Research Office (grant W911NF2310230). 

\appendix
\section{Conversion between different noise interpretations}
\label{sec:Conversion}

Given an SDE with multiplicative noise, the choice of noise interpretation (i.e., the evaluation point for the stochastic integral) affects the drift term in the equation. However, it is always possible to translate an SDE written in one interpretation into an equivalent SDE in another interpretation by appropriately modifying the drift. Here we state a well-know conversion result to translate between noises in different formulation. Particular examples are the It\^o  noise with $\lambda=0$, the Stratonovich noise with $\lambda=\tfrac{1}{2}$ and the Klimontovich noise with $\lambda=1$. The correction term in\eqref{DriftDictionary} below vanishes when $\sigma$ is constant, in which case all interpretations coincide.

Let $\lambda, \gamma \in [0,1]$ denote two possible interpretations of noise, as described previously. Consider the SDE
\begin{equation*}
    d X_t = B(X_t)\, dt + \sqrt{2}\, \sigma(X_t)\, \circ_\lambda d W_t,
\end{equation*}
where $\circ_\lambda$ denotes the stochastic integral evaluated with parameter $\lambda$. One can rewrite this SDE in the $\gamma$-interpretation as
\begin{equation*}
    d X_t = \widetilde{B}(X_t)\, dt + \sqrt{2}\, \sigma(X_t)\, \circ_\gamma d W_t,
\end{equation*}
where the drift $\widetilde{B}$ is given by
\begin{equation}\label{DriftDictionary}
    \widetilde{B}(x) = B(x) + 2(\lambda - \gamma)\, \left( \nabla \cdot \left[\sigma(x)\sigma^T(x)\right] - \sigma(x)\, \nabla \cdot \sigma^T(x) \right);
\end{equation}
for the Stratonovich-to-It\^o case, $\lambda = \frac 1 2$ and $\gamma = 0$, see~\cite[(3.31)]{pavliotis2014stochastic}; the other cases are by linearity in the time parameter $\lambda$ analogous.

\paragraph{Example: Stratonovich to It\^o.}  
For instance, converting from the Stratonovich interpretation ($\lambda = \tfrac{1}{2}$) to the It\^o interpretation ($\gamma = 0$), we obtain
\begin{equation*}
    d X_t = B(X_t)\, dt + \sqrt{2}\, \sigma(X_t)\, \circ_{S} d W_t
\end{equation*}
is equivalent to
\begin{equation*}
    d X_t = \left( B(X_t) +  \nabla \cdot (\sigma(X_t)\sigma^T(X_t)) -  \sigma(X_t)\, \nabla \cdot \sigma^T(X_t) \right) dt + \sqrt{2}\, \sigma(X_t)\, d W_t,
\end{equation*}
where the second SDE is written in the It\^o sense.

\begin{remark}
This formula provides a dictionary for translating between the most common conventions: It\^o ($\lambda=0$), Stratonovich ($\lambda=\tfrac{1}{2}$), and Klimontovich ($\lambda=1$). The correction term vanishes when $\sigma$ is constant, in which case all interpretations coincide.
\end{remark}

\section{Mosco convergence \`a la Kuwae-Shioya}
\label{appendix.mosco}
Here we briefly review the framework of Mosco convergence for Dirichlet forms on varying Hilbert spaces, following Kuwae–Shioya~\cite{kuwae2003convergence}, and the refinement by Kolesnikov~\cite{kolesnikov2006mosco}. We recall the key definitions of Hilbert space convergence, strong and weak convergence of vectors, and Mosco convergence of quadratic forms, as well as their connections to the limiting behavior of associated semigroups. These concepts underpin the analysis of averaging phenomena discussed in the main text.

\subsection{Convergence of Hilbert spaces}
The convergence of Dirichlet forms we are interested in takes place in the setting of convergence of a sequence of Hilbert spaces. We recall this notion of convergence introduced in~\cite{kuwae2003convergence}.

\begin{definition}[Convergence of Hilbert spaces]\label{HilConv}
A sequence of Hilbert spaces $\{ H_n \}_{n \geq 0}$ converges to a Hilbert space $H$ if there exist a dense subset $\Cfrak \subseteq H$ and a family of linear maps $\left\{ \Phi_n \colon \Cfrak \to H_n \right\}_n$ such that
\begin{equation*}
    \lim_{n \to \infty} \| \Phi_n f \|_{H_n} = \| f \|_{H}, \qquad \text{  for all } f \in \Cfrak.
\end{equation*}
\end{definition}

It is also necessary to introduce the concepts of strong and weak convergence of vectors living in a convergent sequence of Hilbert spaces. Hence in Definitions~\ref{strongcon}, \ref{weakcon} and~\ref{MoscoDef} below we assume that the spaces $\{ H_n \}_{n \geq 0}$ converge to the space $H$, in the sense just defined,  with the dense set  $\Cfrak \subset H$ and the sequence of operators $\{ \Phi_n \colon \Cfrak  \to H_n \}_n$ witnessing the convergence.

\bd[Strong convergence on Hilbert spaces]\label{strongcon}
A sequence of vectors $\{  f_n \}$ with $f_n$ in $H_n$ is said to \emph{strongly converge} to a vector $f \in H$ if there exists a sequence $\left\{ \tilde{f}_M \right\} \in \Cfrak$ such that
\be
\lim_{M\to \infty} \left\| \tilde{f}_M -f \right\|_{H} = 0 \nn
\ee
and
\be
\lim_{M \to \infty} \limsup_{n \to \infty} \left\| \Phi_n \tilde{f}_M -f_n \right\|_{H_n} = 0. \nn 
\ee
\ed
\bd[Weak convergence on Hilbert spaces]\label{weakcon}
A sequence of vectors $\{  f_n \}$ with $f_n \in H_n$ is said to \emph{weakly converge}  to a vector $f$ in a  Hilbert space $H$ if
\be
\lim_{n\to \infty} \left \langle f_n, g_n \right \rangle_{H_n} = \ \left \langle f, g \right \rangle_{H}, \nn 
\ee
for every sequence $\{g_n \}$ strongly convergent to $g \in H$.
\ed
\br\label{StrongConvPhin}
Notice that, as expected, strong convergence implies weak convergence, and, for any $f \in \Cfrak$, the sequence $\Phi_n f $ strongly-converges to $f$.
\er

\subsection{Definition of Mosco convergence}
In this section we assume the Hilbert convergence of a sequence of Hilbert spaces $\{ H_n \}_n$ to a space $H$.

\bd[Mosco convergence]\label{MoscoDef}
A sequence of Dirichlet forms $\{ (\caE_n, D(\caE_n))\}_n $, defined on Hilbert spaces $H_n$ \emph{Mosco converges} to a Dirichlet form $(\caE, D(\caE)) $, defined on some Hilbert space $H$, if the following two conditions hold.
\begin{description}
\item[Mosco I.] For every sequence of $f_n \in H_n$ weakly-converging  to $f$ in $H$, one has
\be
\caE ( f ) \leq \liminf_{n \to \infty} \caE_n ( f_n ). \nn 
\ee
\item[Mosco II.] For every $f \in H$, there exists a sequence $ f_n \in H_n$ strongly-converging  to $f$ in $H$ such that
\be
\caE ( f) = \lim_{n \to \infty} \caE_n ( f_n ). \nn 
\ee
\end{description}
\ed

The following theorem from \cite{kuwae2003convergence}, which relates Mosco convergence with convergence of semigroups and resolvents. 

\bt\label{MKS}
Let $\left\{ (\caE_n, D(\caE_n))\right\}_n $ be a sequence of Dirichlet forms on Hilbert spaces $H_n$ and let  $(\caE, D(\caE)) $ be a Dirichlet form  in some Hilbert space $H$. The following statements are equivalent:
\ben
\item $\left\{ (\caE_n, D(\caE_n))\right\}_n $ Mosco-converges to $\{ (\caE, D(\caE))\} $.
\item The associated sequence of semigroups $\left\{ T_{n} (t) \right\}_n $ strongly-converges to the semigroup $ T(t)$ for every $t >0$.
\een
\et

\subsection{Convergence \`a la Kolesnikov}
Here we include a notion of convergence introduced in~\cite{kolesnikov2006mosco}. This notion builds on the notion of Kuwae-Shioya from~\cite{kuwae2003convergence}, and it is immediately applicable in our setting.

Let us denote by $\caH$ the disjoint union of the converging sequence of Hilbert spaces $H_n$, i.e., $\caH := \cup_{n \in \N} H_n$ and define weak and strong convergence in $\caH$ as in Definition~\ref{weakcon} and Definition~\ref{strongcon}.

\begin{definition}[Kolesnikov convergence]\label{KolesnikovConv}
    Suppose that we are given a convergent sequence of Hilbert spaces $H_n \to H$, and a sequence of quadratic forms $\lbrace \xi_n\colon H_n \to \bar{\R} \rbrace$. We say that a sequence of pairs $\lbrace (\xi_n, H_n) \rbrace$ \emph{converges} to $\lbrace (\xi, H) \rbrace$ if
    \begin{equation*}
        \Phi_n(\Cfrak) \subset  D(\xi_n) \quad \text{ for every $n$,}
    \end{equation*}
    where $\Cfrak \subset D(\xi)$ is dense in $\left(D(\xi), \xi^1\right)$ and
    \begin{equation*}
        \lim_{n \to \infty} \xi_n(\Phi_n(f)) = \xi(f)
    \end{equation*}
    for every $f \in \Cfrak$.
\end{definition}
Given a convergent sequence $\lbrace (\xi_n, H_n) \rbrace$ we can consider the sequence of domains $D(\xi_n)$ as a further sequence of Hilbert spaces with inner products $\xi_n^1$. This sequence will naturally converge to $D(\xi)$. We then can also define the space analogous to $\caH$, we will denote this space by $\caH^1:= \cup_n D(\xi_n)$ and endow it with equivalent notions of strong and weak convergence as in Definition~\ref{weakcon} and Definition~\ref{strongcon}.

\begin{remark}
 Notice that in general we have $\caH \neq \caH^1$. What is true is that if $f_n$ converges strongly to some $f$ in $\caH^1$, it also converges strongly in $\caH$. This is false for weak convergence.    
\end{remark} 

\begin{theorem}\label{KolesnikovThm}
    Let the sequence $\lbrace (\xi_n, H_n) \rbrace$ converge to $\lbrace (\xi, H) \rbrace$ in the sense of Definition~\ref{KolesnikovConv}. Suppose that for every $\caH$-weakly convergent sequence $f_n \to f$ such that $\sup_n \xi_n^1(f_n) < \infty$ one has that $f \in D(\xi)$ and $f_n \to f$ in $\caH^1$. Then $\xi_n \to \xi$ in the Mosco sense.
\end{theorem}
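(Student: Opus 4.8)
The plan is to verify the two conditions defining Mosco convergence, \emph{Mosco~I} and \emph{Mosco~II} of Definition~\ref{MoscoDef}, using the Kolesnikov convergence $\{(\xi_n,H_n)\}\to\{(\xi,H)\}$ of Definition~\ref{KolesnikovConv} together with the compactness-type hypothesis that $\caH$-weak convergence with bounded $\xi_n^1$-energy forces strong $\caH^1$-convergence (and membership in $D(\xi)$). I will repeatedly use two elementary facts from the Kuwae--Shioya calculus of varying Hilbert spaces. First, if $f_n\to f$ strongly in $\caH$ then $\|f_n\|_{H_n}\to\|f\|_H$, and likewise in $\caH^1$ for the norms $(\xi_n^1)^{1/2}$; this follows from the triangle inequality together with $\|\Phi_n h\|_{H_n}\to\|h\|_H$ for $h\in\Cfrak$, and since $\xi_n=\xi_n^1-\|\cdot\|_{H_n}^2$ it shows that strong $\caH^1$-convergence upgrades to convergence of the energies $\xi_n(f_n)\to\xi(f)$. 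Second, $\caH$-weakly convergent sequences are norm-bounded, and a subsequence of a convergent sequence of Hilbert spaces is again convergent with the restricted embeddings; hence one may pass to subsequences freely and reapply the hypothesis to them.

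For \emph{Mosco~I}, let $f_n\in H_n$ converge $\caH$-weakly to $f$ and set $L:=\liminf_n\xi_n(f_n)$. If $L=+\infty$ there is nothing to prove, so assume $L<\infty$ and extract a subsequence $(n_k)$ with $\xi_{n_k}(f_{n_k})\to L$. Since $(f_{n_k})$ is still $\caH$-weakly convergent to $f$, it is norm-bounded, so $\sup_k\xi_{n_k}^1(f_{n_k})=\sup_k\bigl(\xi_{n_k}(f_{n_k})+\|f_{n_k}\|_{H_{n_k}}^2\bigr)<\infty$. The hypothesis of the theorem then yields $f\in D(\xi)$ and $f_{n_k}\to f$ strongly in $\caH^1$; by the first fact above, $\xi_{n_k}^1(f_{n_k})\to\xi^1(f)$ and $\|f_{n_k}\|_{H_{n_k}}^2\to\|f\|_H^2$, whence $L=\lim_k\xi_{n_k}(f_{n_k})=\xi(f)$. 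Thus $\liminf_n\xi_n(f_n)\ge\xi(f)$, which is Mosco~I.

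For \emph{Mosco~II}, fix $f\in H$. If $f\notin D(\xi)$, take any strongly convergent sequence $f_n\to f$ in $\caH$ (such a sequence exists by density of $\Cfrak$ in $H$, Remark~\ref{StrongConvPhin}, and a diagonalization of $\Phi_n g_M$ over $g_M\in\Cfrak$ with $g_M\to f$). If $\liminf_n\xi_n(f_n)$ were finite, an energy-bounded subsequence would be $\caH$-weakly convergent to $f$ with $\sup\xi_n^1$ bounded (using norm-boundedness from strong convergence), and the hypothesis would force $f\in D(\xi)$, a contradiction; hence $\xi_n(f_n)\to+\infty=\xi(f)$, and Mosco~II holds for such $f$. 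If instead $f\in D(\xi)$, choose by density $g_M\in\Cfrak$ with $\xi^1(g_M-f)\to0$, so that $g_M\to f$ in $H$ and $\xi(g_M)\to\xi(f)$. For each fixed $M$, Remark~\ref{StrongConvPhin} gives $\Phi_n g_M\to g_M$ strongly in $\caH$ as $n\to\infty$, and Definition~\ref{KolesnikovConv} gives $\xi_n(\Phi_n g_M)\to\xi(g_M)$. One then selects a slowly growing index $M(n)\uparrow\infty$, defined as the largest level at stage $n$ for which all of the (countably many) approximations $|\xi_n(\Phi_n g_M)-\xi(g_M)|$ and $\bigl|\,\|\Phi_n(g_\ell-g_M)\|_{H_n}-\|g_\ell-g_M\|_H\,\bigr|$ with $\ell\le M$ are already within $1/M$; setting $f_n:=\Phi_n g_{M(n)}$ gives $\xi_n(f_n)\to\xi(f)$, and, using $(g_\ell)_\ell$ as the witnessing sequence in Definition~\ref{strongcon} together with $\|g_\ell-f\|_H\to0$, also $f_n\to f$ strongly in $\caH$.

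I expect the only real work to be the diagonal extraction in Mosco~II — in particular checking the double limit $\lim_\ell\limsup_n\|\Phi_n g_\ell-f_n\|_{H_n}=0$ required by Definition~\ref{strongcon}, which reduces to the estimate $\|\Phi_n(g_\ell-g_{M(n)})\|_{H_n}\le\|g_\ell-g_{M(n)}\|_H+1/M(n)$ valid for $n$ large and $\ell\le M(n)$, followed by letting $n\to\infty$ and then $\ell\to\infty$. The conceptually essential step, by contrast, is Mosco~I: bare Kolesnikov convergence only controls $\xi_n$ along the distinguished sequences $\Phi_n g$, and it is precisely the extra hypothesis of the theorem that promotes an arbitrary energy-bounded $\caH$-weakly convergent sequence to a strongly $\caH^1$-convergent one, turning the required $\liminf$-inequality into an outright identification of the limiting energy.
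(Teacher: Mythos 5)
The paper itself gives no proof of this theorem—it is quoted from Kolesnikov~\cite{kolesnikov2006mosco}—so your argument has to be measured against that standard proof. Your overall architecture (verify Mosco~I and Mosco~II separately; build the recovery sequence by diagonalizing over $\Phi_n g_M$ with $g_M\in\Cfrak$, $\xi^1(g_M-f)\to 0$; handle $f\notin D(\xi)$ by contradiction through the hypothesis) is the right one, and your Mosco~II is correct up to routine diagonal bookkeeping. The genuine gap is in Mosco~I, and it comes from reading ``$f_n\to f$ in $\caH^1$'' as \emph{strong} convergence. In Kolesnikov's theorem the hypothesis only provides \emph{weak} convergence in $\caH^1$, and this is the only reading consistent with the remark immediately following the theorem (``the conditions are trivially satisfied whenever $\caH=\caH^1$'') and with the way the paper applies it: with your strong reading, even a constant sequence of spaces with $\xi_n=\xi$ a standard Dirichlet form would violate the hypothesis, since bounded-energy weakly convergent sequences need not converge strongly in the form norm. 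Under the weak reading your key step ``$\xi^1_{n_k}(f_{n_k})\to\xi^1(f)$'' is unavailable: weak convergence yields only the lower bound $\xi^1(f)\le\liminf_k\xi^1_{n_k}(f_{n_k})$, and since weak $\caH$-convergence gives $\|f\|_H\le\liminf_k\|f_{n_k}\|_{H_{n_k}}$ rather than convergence of the $L^2$-norms, you cannot subtract the zero-order parts to conclude $\xi(f)\le L$. So, as written, you have proved a statement with a strictly stronger hypothesis—one that would not be satisfied in the paper's application.

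The standard repair of Mosco~I uses bilinearity instead of norm convergence. Along the subsequence realizing $L=\liminf_n\xi_n(f_n)<\infty$ (which has $\sup_k\xi^1_{n_k}(f_{n_k})<\infty$ by your boundedness remark), the hypothesis gives $f\in D(\xi)$ and weak $\caH^1$-convergence. For every $g\in\Cfrak$ one has
\[
\xi_{n_k}(f_{n_k})\;\ge\;2\,\xi_{n_k}\bigl(f_{n_k},\Phi_{n_k}g\bigr)-\xi_{n_k}\bigl(\Phi_{n_k}g\bigr).
\]
By the Kolesnikov convergence, $\xi_{n_k}(\Phi_{n_k}g)\to\xi(g)$ and $\|\Phi_{n_k}g\|_{H_{n_k}}\to\|g\|_H$, so $\Phi_{n_k}g\to g$ strongly in $\caH^1$; hence weak $\caH^1$-convergence gives $\xi^1_{n_k}(f_{n_k},\Phi_{n_k}g)\to\xi^1(f,g)$, while weak $\caH$-convergence gives $\langle f_{n_k},\Phi_{n_k}g\rangle_{H_{n_k}}\to\langle f,g\rangle_H$, and subtracting, $\xi_{n_k}(f_{n_k},\Phi_{n_k}g)\to\xi(f,g)$. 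Therefore $L\ge 2\,\xi(f,g)-\xi(g)$ for all $g\in\Cfrak$, and letting $g\to f$ in the $\xi^1$-norm (density of $\Cfrak$ in $(D(\xi),\xi^1)$) yields $L\ge\xi(f)$, which is Mosco~I. One further point you assert but should state honestly: the hypothesis is formulated for full sequences, whereas both you and the standard argument apply it along the subsequence realizing the liminf; this is the intended use (the restricted family of spaces and forms again satisfies all standing assumptions), but it deserves an explicit sentence rather than being folded into the remark that subsequences of convergent Hilbert spaces converge.
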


\begin{remark}
    Notice that the conditions of Theorem \ref{KolesnikovThm} are trivially satisfied whenever it is true that $\caH =\caH^1$.
\end{remark}

\printbibliography

\end{document}